\newtheorem{proposition}{Proposition}[section]
\newtheorem{definition}[proposition]{Definition}
\newtheorem{cor}[proposition]{Corollary}
\newtheorem*{rem}{Remark}
\newtheorem{remark}[proposition]{Remark}
\newtheorem{notation}{Notation}
\providecommand{\keywords}[1]{\textbf{\textit{Keywords---}} #1}
\providecommand{\MSC}[1]{\textbf{2010 MSC:} #1}
\def\spec{\text{Spec}\,R}
\begin{document}

\title{Simple Type Theory is not too Simple}
\subtitle{Grothendieck's Schemes without Dependent Types}
\author[1]{Anthony Bordg\thanks{apdb3@cam.ac.uk}}
\author[2]{Lawrence Paulson}
\author[3]{Wenda Li}
\affil[1-3]{University of Cambridge}
\date{}
\maketitle

\begin{abstract}
Church's simple type theory is often deemed too simple for elaborate mathematical constructions. In particular, doubts were raised whether schemes could be formalized in this setting and a challenge was issued. Schemes are sophisticated mathematical objects in algebraic geometry introduced by Alexander Grothendieck in 1960. In this article we report on a successful formalization of schemes in the simple type theory of the proof assistant Isabelle/HOL, and we discuss the design choices which make this work possible. We show in the particular case of schemes how the powerful dependent types of Coq or Lean can be traded for a minimalist apparatus called locales. 
\end{abstract}

\MSC{14A15, 14-04, 03B35, 68V20}

\keywords{Logic and verification, Higher order logic, Type theory, Algebraic geometry, Sheaves, Schemes}

\section{Introduction}
\label{sec:intro}

\subsection*{A Challenge Accepted and Met}

Proof assistants have made impressive progress on the formalisation of algebra \cite{oddorderthm}. In 2003, Laurent Chicli, as the topic of his PhD thesis \cite{chiclithesis}, formalized sheaves of rings and affine schemes in the proof assistant Coq \cite{coqrefmanual}, which is based on a powerful dependent type theory known as the \emph{Calculus of Inductive Constructions} \cite{coquand1986calculus, coquand1988inductively}. However, Chicli's Coq code certainly became deprecated long ago. More recently, in 2019 Grothendieck's schemes \cite{GrothendieckEGAI} have been formalized by a group of six mathematicians led by Kevin Buzzard \cite{schemesinLean}, with additional insights from Reid Barton and Mario Carneiro, using the brave new Lean theorem prover, which is based on a similar dependent type theory \cite{de2015lean}.
For the reader unfamiliar with schemes we will quote Kevin Buzzard.
\begin{quote}
	Schemes are the fundamental objects of study in algebraic geometry. They were discovered (in their current form) by Grothendieck in the late 1950s and early 1960s and they revolutionised the theory of algebraic geometry.\footnote{\url{https://github.com/leanprover-community/mathlib/issues/26}} 
\end{quote}
and
\begin{quote}
	A scheme is a mathematical object whose definition and basic properties are usually taught at MSc or early PhD level in a typical mathematics department.
\end{quote}	
Kevin Buzzard in 2017 proposed the formalization of schemes 
\begin{quote}
	as basically a challenge to see if Lean can handle such a complex definition.\footnote{\url{https://github.com/leanprover-community/mathlib/issues/26}} 
\end{quote}
We should note that the Isabelle proof assistant is based on a much more minimalist type theory, known as simple type theory. As a consequence, many users of dependent type theories see Isabelle's type theory as a far less expressive system for formalizing advanced mathematics.  These doubts have been expressed by Kevin Buzzard on his blog:
\begin{quote}
	What can Isabelle/HOL actually do before it breaks? Nobody knows. [\dots] But do you people want to attract “working mathematicians”? Then where are the schemes? Can your system even do schemes? I don’t know. Does anyone know? If it cannot then this would be very valuable to know because it will help mathematician early adopters to make an informed decision about which system to use.\footnote{\url{https://xenaproject.wordpress.com/2020/02/09/where-is-the-fashionable-mathematics/}}
\end{quote}

Even the status of sheaves of rings, a prerequisite for schemes, was unclear:
\begin{quote}
	I am not sure that it is even possible to write this code in Isabelle/HOL in such a way that it will run in finite time, [\dots], and a sheaf is a dependent type, and your clever HOL workarounds will not let you use typeclasses [\dots]\footnote{\url{https://xenaproject.wordpress.com/2020/02/09/where-is-the-fashionable-mathematics/}. See the section entitled \textit{Isabelle/HOL users}.}
\end{quote}

In this article we present a formalization of sheaves of rings and schemes in Isa\-belle/HOL reaching the benchmark result set by \cite{schemesinLean}: an affine scheme is a scheme.

\subsection*{Towards a Discipline of Formalizing}

To achieve this result, we tried to bridge the gap between set theory and type theory within the simple type theory of Isabelle/HOL, while forgoing the extension of Isabelle's logic with a new rule for type definition as proposed by Kun\v{c}ar and Popescu in their types-to-sets framework \cite{typestosets}. We avoided Isabelle's type classes and type declarations whenever possible and relied on Isabelle's module system called \textit{locales}, whose modern incarnation appeared in 2006 \cite{ballarin2006interpretation}. Locales pervade our development and we exemplify their use in topology, abstract algebra and algebraic geometry. In particular, the present work makes a triple contribution to Isabelle/HOL: a first building block towards a new topology library, a new building block towards a library for abstract algebra and finally a formalization of schemes. The formalized material has a size of 7300 lines of code \cite{Grothendieck_Schemes-AFP}, including material in commutative algebra (prime and maximal ideals, localization of a ring) and our partial reconstruction of the topology library (down to the notion of a topological space).
Our experiment meets the challenge of formalizing for the first time schemes in simple type theory and by doing so it demonstrates that intricate dependencies of mathematical structures can be managed in this formal logical system. In Section \ref{sec-conclusion} the current limitations of locales are addressed and we point out possible improvements to make formal statements handled with locales more concise. Our approach through locales allows for intricate definitions as well as proofs where the efficient proof automation of Isabelle/HOL kicks in. We hope this work will help mathematicians interested in the formalization of mathematics to evaluate the formal systems available to them.

\section{A World Without Dependent Types: Problems and solutions}
\label{sec:isabelle}

\subsection*{Isabelle/HOL and Simple Type Theory}

Isabelle/HOL~\cite{isa-tutorial} is a proof assistant for higher-order logic---Church's simple type theory~\cite{church40}---and is therefore based on the typed $\lambda$-calculus with boolean and function types. A boolean-valued function is a predicate, which yields a simple typed set theory that is expressive enough to express abstract mathematics, as we demonstrate below. 

In Church's original conception, the sole purpose of types is to prevent inconsistency, all collections being represented as sets. People today, influenced by programming languages, prefer to give types a more dominant role. They expect to have numeric types like \isa{int}, \isa{nat}, \isa{real}, \isa{complex} and for types to play a major role in logical reasoning. For example, an expression like \isa{x+y} should be interpreted with respect to the types of \isa{x} and~\isa{y}.
Isabelle/HOL's \emph{type classes} \cite{wenzel-type} support this sort of type-based disambiguation in a strong sense, so that \isa{x+y} refers to the correct instance of addition.
Type classes work for such trivialities as the basic laws for addition and multiplication (with the help of type classes for groups and rings) and for more advanced topological properties~\cite{hoelzl-filters}. A new type, once proved to be a topological space, instantly inherits such concepts as limits and continuity along with all theorems proved about them.

The drawback of type classes is simply that they refer to types. Mathematical constructions are generally too complex to formalise as types. Even the carrier of a group can be a complex object. Moving to a stronger type system, as done in Coq or Lean, brings issues of its own. Our solution is to define what we need using the naive set theory that comes with higher-order logic, through the mechanism of locales.%
\footnote{Not to be confused with the locales of point-free topology!}

\subsection*{Locales}
\label{subsec:locales}

A \emph{locale}~\cite{ballarin-locales-module} represents an Isabelle/HOL proof context. It can take parameters and instances of other locales. It can declare components constrained by assumptions. Thus they correspond directly to the mathematical practice of defining a monoid, say, as a tuple $(M,\cdot ,1)$ satisfying the obvious properties. 
A locale can inherit multiple instances, as when we define a monoid homomorphism with respect to two monoids.

Logically, a locale is nothing but a predicate; their power comes from Isabelle's mechanisms for creating, managing and using locale hierarchies.
If we work in a particular locale, we can refer to its components and assumptions. But we can also prove membership of a locale, which means to exhibit particular constructions---say a purported monoid carrier along with its multiplication and identity---and prove that they satisfy the locale's assumptions. \emph{Sublocale} declarations provide a means of proving inclusions between locales, for instance to prove once and for all that every submonoid is in particular a monoid.
We found that locales handled the tangle of definitions building up to schemes faithfully.

However, the right way to formalise advanced algebra is not obvious. During the formal development the user has to decide how mathematical structures should be best organized in the proof assistant so that they can be used mechanically in an efficient way.
It is well-known in the Isabelle community that the main algebra library needs to be rebuilt from scratch. In fact, there are many libraries for algebra in Isabelle: \textit{HOL-Algebra}\footnote{\url{https://isabelle.in.tum.de/dist/library/HOL/HOL-Algebra/}} from the previous millennium (1999), the more recent \textit{HOL-Computational\_Algebra}\footnote{\url{https://isabelle.in.tum.de/dist/library/HOL/HOL-Computational_Algebra/}}, and moreover some entries in the Archive of Formal Proofs\footnote{https://www.isa-afp.org/} like \textit{Groups, Rings and Modules} \cite{Group-Ring-Module-AFP} and \textit{Vector Spaces} \cite{VectorSpace-AFP} among others. As a result, there is a lot of overlapping material, which makes the status of algebra confusing and unclear for the newcomer in the Isabelle world. And while these formalizations constitute an impressive body of work, parts of it are deprecated code not using locales or the Isar language, an additional layer of vernacular which allows for structured proofs making them more legible and easier to maintain. Despite these problems, there are noteworthy achievements like the formalisation of the algebraic closure of a field \cite{de2020algebraically}.

Even using locales, we can still go wrong. The root of the problems in HOL-Algebra is the desire to refer to a structure such as a group with its components using a single variable, as a record data structure. The new (at the time) extensible records seemed perfect for the task.
But they led to some peculiarities: notably, the locale \emph{abelian\_group} (in \textit{Theory Ring}) which presents the odd twist of requiring a ring structure. The lack of multiple inheritance for records seems to have required the awkward use of the ring record for abelian groups. However, Clemens Ballarin recently conducted an experiment \cite{ballarin-exploring} showing that  locales, without records, allow for a smooth handling of basic algebraic structures in Isabelle such as monoids, groups and rings. We decided to base our formal development on this experiment.  
In the next section we introduce schemes and their formal counterparts in simple type theory, replacing the dependent types of Buzzard et al.\  \cite{schemesinLean} by Isabelle's locales.

\section{Schemes in Isabelle/HOL}
\label{sec:schemes}

\subsection*{Elements of Topology}

We seized the opportunity of formalizing schemes to build a new topology library despite the two existing formalizations of topology in Isabelle/HOL, namely \textit{HOL-Topological\_Spaces.thy}\footnote{\url{https://isabelle.in.tum.de/dist/library/HOL/HOL/Topological_Spaces.html}} and \textit{HOL-Analysis/Abstract\_Topology}\footnote{\url{https://isabelle.in.tum.de/dist/library/HOL/HOL-Analysis/Abstract_Topology.html}}. Our new topology library is entirely built on locales without using type classes or type declarations (via the \textit{typedef} command). In particular, our topological spaces have explicit carrier sets as part of their data instead of using \textit{UNIV}, the set of all elements of some type, or having to define it later as the union of all the open sets. 

\begin{isabelle}
	\isacommand{locale}\ topological\_space\ =\ \isanewline
	\ \ \isakeyword{fixes}\ S\ ::\ "'a\ set"\ \isakeyword{and}\ is\_open\ ::\ "'a\ set\ \isasymRightarrow \ bool"\isanewline
	\ \ \isakeyword{assumes}\ open\_space\ [simp,\ intro]:\ "is\_open\ S"\ \isanewline
	\ \ \ \ \isakeyword{and}\ open\_empty\ [simp,\ intro]:\ "is\_open\ \{\}"\ \isanewline
	\ \ \ \ \isakeyword{and}\ open\_imp\_subset:\ "is\_open\ U\ \isasymLongrightarrow \ U\ \isasymsubseteq \ S"\isanewline
	\ \ \ \ \isakeyword{and}\ open\_inter\ [intro]:\ "\isasymlbrakk is\_open\ U;\ is\_open\ V\isasymrbrakk \ \isasymLongrightarrow \ is\_open\ (U\ \isasyminter \ V)"\ \isanewline
	\ \ \ \ \isakeyword{and}\ open\_union\ [intro]:\ \isanewline
	\ \ \ \ \ \ \ \ \ \ \ \ \ \ \ \ "\isasymAnd F::('a\ set)\ set.\ (\isasymAnd x.\ x\ \isasymin \ F\ \isasymLongrightarrow \ is\_open\ x)\isanewline
	\ \ \ \ \ \ \ \ \ \ \ \ \ \ \ \ \ \ \ \ \ \ \ \ \ \ \ \ \ \ \ \ \ \ \ \ \ \ \ \ \ \ \ \ \ \ \ \ \ \isasymLongrightarrow \ is\_open\ (\isasymUnion x\isasymin F.\ x)"
\end{isabelle}

The token \textit{'a set} denotes the type of sets of elements of type \textit{'a}, this last type being the type of the elements of the carrier \textit{S} of our topological space.
Within the locale for topological spaces the axiom \textit{open\_imp\_subset} enforces that an open is automatically a subset of the ambient space. This is a welcome addition, since it avoids having to add this assumption in every lemma mentioning an open set among its assumptions, making the economy of dozens of trivial assumptions.

Then we define the topology generated by a family of subsets of a given set,

\begin{isabelle}
	\isacommand{inductive}\ generated\_topology\ ::\ "'a\ set\ \isasymRightarrow \ 'a\ set\ set\ \isasymRightarrow \ 'a\ set\ \isasymRightarrow \ bool"\ \isanewline
	\ \ \ \ \isakeyword{for}\ S\ ::\ "'a\ set"\ \isakeyword{and}\ B\ ::\ "'a\ set\ set"\isanewline
	\ \ \isakeyword{where}\isanewline
	\ \ \ \ UNIV:\ "generated\_topology\ S\ B\ S"\isanewline
	\ \ |\ Int:\ "generated\_topology\ S\ B\ (U\ \isasyminter \ V)"\ \isanewline
	\ \ \ \ \ \ \ \ \ \ \ \ \isakeyword{if}\ "generated\_topology\ S\ B\ U"\ \isakeyword{and}\ "generated\_topology\ S\ B\ V"\isanewline
	\ \ |\ UN:\ "generated\_topology\ S\ B\ (\isasymUnion K)"\ \isanewline
	\ \ \ \ \ \ \ \ \ \ \ \ \isakeyword{if}\ "(\isasymAnd U.\ U\ \isasymin \ K\ \isasymLongrightarrow \ generated\_topology\ S\ B\ U)"\isanewline
	\ \ |\ Basis:\ "generated\_topology\ S\ B\ b"\ \isakeyword{if}\ "b\ \isasymin \ B\ \isasymand \ b\ \isasymsubseteq \ S"
\end{isabelle}

where \textit{'a set set} denotes the type of sets of elements of type \textit{'a set}. The fact that the generated topology is indeed a topology is established as one of our first lemmas.  
\begin{isabelle}
	\isacommand{lemma}\ generated\_topology\_is\_topology:\isanewline
	\ \ \isakeyword{fixes}\ S::\ "'a\ set"\ \isakeyword{and}\ B::\ "'a\ set\ set"\isanewline
	\ \ \isakeyword{shows}\ "topological\_space\ S\ (generated\_topology\ S\ B)"
\end{isabelle}

Covers in topology are an interesting example, since they illustrate how locales can be nested like matryoshka dolls. 

\begin{isabelle}
	\isacommand{locale}\isamarkupfalse%
	\ cover{\isacharunderscore}{\kern0pt}of{\isacharunderscore}{\kern0pt}subset\ {\isacharequal}{\kern0pt}\isanewline
	\ \ \isakeyword{fixes}\ X{\isacharcolon}{\kern0pt}{\isacharcolon}{\kern0pt}\ {\isachardoublequoteopen}{\isacharprime}{\kern0pt}a\ set{\isachardoublequoteclose}\ \isakeyword{and}\ U{\isacharcolon}{\kern0pt}{\isacharcolon}{\kern0pt}\ {\isachardoublequoteopen}{\isacharprime}{\kern0pt}a\ set{\isachardoublequoteclose}\ \isanewline
	\ \ \ \ \isakeyword{and}\ index{\isacharcolon}{\kern0pt}{\isacharcolon}{\kern0pt}\ {\isachardoublequoteopen}real\ set{\isachardoublequoteclose}\ \isakeyword{and}\ cover{\isacharcolon}{\kern0pt}{\isacharcolon}{\kern0pt}\ {\isachardoublequoteopen}real\ {\isasymRightarrow}\ {\isacharprime}{\kern0pt}a\ set{\isachardoublequoteclose}\isanewline
	\ \ \isakeyword{assumes}\ {\isachardoublequoteopen}U\ {\isasymsubseteq}\ X{\isachardoublequoteclose}\ \isanewline
	\ \ \ \ \isakeyword{and}\ {\isachardoublequoteopen}{\isasymAnd}i{\isachardot}{\kern0pt}\ i\ {\isasymin}\ index\ {\isasymLongrightarrow}\ cover\ i\ {\isasymsubseteq}\ X{\isachardoublequoteclose}\isanewline
	\ \ \ \ \isakeyword{and}\ {\isachardoublequoteopen}U\ {\isasymsubseteq}\ {\isacharparenleft}{\kern0pt}{\isasymUnion}i{\isasymin}index{\isachardot}{\kern0pt}\ cover\ i{\isacharparenright}{\kern0pt}{\isachardoublequoteclose}\isanewline
	\isanewline
	\isacommand{locale}\isamarkupfalse%
	\ open{\isacharunderscore}{\kern0pt}cover{\isacharunderscore}{\kern0pt}of{\isacharunderscore}{\kern0pt}subset\ {\isacharequal}{\kern0pt}\ topological{\isacharunderscore}{\kern0pt}space\ X\ is{\isacharunderscore}{\kern0pt}open\ \isanewline
	\ \ \ \ \ \ \ \ \ \ {\isacharplus}{\kern0pt}\ cover{\isacharunderscore}{\kern0pt}of{\isacharunderscore}{\kern0pt}subset\ X\ U\ I\ C\ \isanewline
	\ \ \isakeyword{for}\ X\ \isakeyword{and}\ is{\isacharunderscore}{\kern0pt}open\ \isakeyword{and}\ U\ \isakeyword{and}\ I\ \isakeyword{and}\ C\ {\isacharplus}{\kern0pt}\isanewline
	\ \ \isakeyword{assumes}\ {\isachardoublequoteopen}{\isasymAnd}i{\isachardot}{\kern0pt}\ i{\isasymin}I\ {\isasymLongrightarrow}\ is{\isacharunderscore}{\kern0pt}open\ {\isacharparenleft}{\kern0pt}C\ i{\isacharparenright}{\kern0pt}{\isachardoublequoteclose}\isanewline
	\isanewline
	\isacommand{locale}\isamarkupfalse%
	\ open{\isacharunderscore}{\kern0pt}cover{\isacharunderscore}{\kern0pt}of{\isacharunderscore}{\kern0pt}open{\isacharunderscore}{\kern0pt}subset\ \isanewline
	\ \ \ \ \ \ \ \ \ \ \ \ {\isacharequal}{\kern0pt}\ open{\isacharunderscore}{\kern0pt}cover{\isacharunderscore}{\kern0pt}of{\isacharunderscore}{\kern0pt}subset\ X\ is{\isacharunderscore}{\kern0pt}open\ U\ I\ C\ \isanewline
	\ \ \isakeyword{for}\ X\ \isakeyword{and}\ is{\isacharunderscore}{\kern0pt}open\ \isakeyword{and}\ U\ \isakeyword{and}\ I\ \isakeyword{and}\ C\ {\isacharplus}{\kern0pt}\isanewline
	\ \ \isakeyword{assumes}\ {\isachardoublequoteopen}is{\isacharunderscore}{\kern0pt}open\ U{\isachardoublequoteclose}
\end{isabelle}

Above, each locale builds on the previous one to define eventually an open cover of an open subset. The token ``+'' is used to declare a mathematical structure which was previously introduced and also to add possibly a list of assumptions.
Our library includes also basic elements like homeomorphisms and the subspace topology induced on a subset.

\subsection*{Elements of Algebraic Geometry}

Our presentation follows the classic graduate textbook of Hartshorne \cite{hartshorne}.
For the remainder of this text fix a ring $R$. The reader can safely assume that $R$ is commutative.

\subsubsection*{The Zariski Topology}

The definitions of a ring $R$  and of an ideal of $R$ were introduced in \cite{ballarin-exploring}. Note that in Isabelle we do not assume from the start that $R$ is commutative, since we strive for generality. In formal mathematics, adding an axiom later is easier than removing one!

\begin{definition}[ideal]
	An ideal $\mathfrak{a}$ of $R$ is a subset of $R$ which is an additive subgroup such that $R\mathfrak{a} = \mathfrak{a}$.
\end{definition}

\begin{isabelle}
\isacommand{locale}\ ideal\ =\ subgroup\_of\_additive\_group\_of\_ring\ +\isanewline
\ \ \isakeyword{assumes}\ ideal:\ "\isasymlbrakk \ a\ \isasymin \ R;\ b\ \isasymin \ I\ \isasymrbrakk \ \isasymLongrightarrow \ a\ \isasymcdot \ b\ \isasymin \ I"\ \isanewline
\ \ \ \ \ \ \ \ \ \ \ \ \ \ \ \ \ "\isasymlbrakk \ a\ \isasymin \ R;\ b\ \isasymin \ I\ \isasymrbrakk \ \isasymLongrightarrow \ b\ \isasymcdot \ a\ \isasymin \ I"
\end{isabelle}

The token \textit{subgroup\_of\_additive\_group\_of\_ring} is the way to introduce in Isabelle a ring $R$ and an additive subgroup of $R$ using Isabelle's locale mechanism.
Note that in the code above the ideal is denoted $I$ while gothic letters are sometimes used for denoting ideals. In our code we use both depending on the situation.
One easily checks that $\lbrace 0 \rbrace$ and $R$ are ideals of $R$. In our formalization this translates as follows.

\begin{isabelle}
\isacommand{lemma}\ (\isakeyword{in}\ comm\_ring)\ ideal\_R\_R:\ "ideal\ R\ R\ (+)\ (\isasymcdot )\ \isasymzero \ \isasymone "
\end{isabelle}

\begin{isabelle}
\isacommand{lemma}\ (\isakeyword{in}\ comm\_ring)\ ideal\_0\_R:\ "ideal\ \{\isasymzero \}\ R\ (+)\ (\isasymcdot )\ \isasymzero \ \isasymone "
\end{isabelle}

The token ``\textbf{in} \textit{comm\_ring}'' allows to open a context where a commutative ring, here denoted $R$, has been previously declared in Isabelle.
Given $\mathfrak{a}$ and $\mathfrak{b}$ two ideals of $R$, one defines $\mathfrak{a} \mathfrak{b}$ to be the ideal whose underlying set is 
\[
	\lbrace a_1 b_1 + \dots + a_n b_n \mid a_i \in \mathfrak{a}, b_i \in \mathfrak{b}, n \in \mathbb{N} \rbrace.
\]

Actually, $\mathfrak{a} \mathfrak{b}$ is the smallest ideal generated by the set 
	\[
	\lbrace a b \mid a \in \mathfrak{a}, b \in \mathfrak{b} \rbrace \, .
	\]

\begin{isabelle}
\isacommand{definition}\ (\isakeyword{in}\ comm\_ring)\ ideal\_gen\_by\_prod\ \isanewline
\ \ \ \ \ \ \ \ \ \ \ \ \ \ \ \ \ \ \ \ \ \ \ \ \ \ \ \ \ \ ::\ "'a\ set\ \isasymRightarrow \ 'a\ set\ \isasymRightarrow \ 'a\ set"\isanewline
\ \ \isakeyword{where}\ "ideal\_gen\_by\_prod\ \isasymaa \ \isasymbb \ \isasymequiv \ additive.subgroup\_generated\ \isanewline
\ \ \ \ \ \ \ \ \ \ \ \ \ \ \ \ \ \ \ \ \ \ \ \ \{x.\ \isasymexists a\ b.\ x\ =\ a\ \isasymcdot \ b\ \isasymand \ a\ \isasymin \ \isasymaa \ \isasymand \ b\ \isasymin \ \isasymbb \}"
\end{isabelle}

One easily checks $\mathfrak{a} \mathfrak{b}$ is an ideal.

\begin{isabelle}
\isacommand{lemma}\ (\isakeyword{in}\ comm\_ring)\ ideal\_subgroup\_generated:\isanewline
\ \ \isakeyword{assumes}\ "ideal\ \isasymaa \ R\ (+)\ (\isasymcdot )\ \isasymzero \ \isasymone "\ \isakeyword{and}\ "ideal\ \isasymbb \ R\ (+)\ (\isasymcdot )\ \isasymzero \ \isasymone "\isanewline
\ \ \isakeyword{shows}\ "ideal\ (ideal\_gen\_by\_prod\ \isasymaa \ \isasymbb )\ R\ (+)\ (\isasymcdot )\ \isasymzero \ \isasymone "
\end{isabelle}

This ideal can be characterized as the intersection of all ideals of $R$ containing the set
	\[
	\lbrace a b \mid a \in \mathfrak{a}, b \in \mathfrak{b} \rbrace .
	\]
	
\begin{isabelle}
\isacommand{lemma}\ (\isakeyword{in}\ comm\_ring)\ ideal\_gen\_by\_prod\_is\_inter:\isanewline
\ \ \isakeyword{assumes}\ "ideal\ \isasymaa \ R\ (+)\ (\isasymcdot )\ \isasymzero \ \isasymone "\ \isakeyword{and}\ "ideal\ \isasymbb \ R\ (+)\ (\isasymcdot )\ \isasymzero \ \isasymone "\isanewline
\ \ \isakeyword{shows}\ "ideal\_gen\_by\_prod\ \isasymaa \ \isasymbb \ =\ \isasymInter \ \{I.\ ideal\ I\ R\ (+)\ (\isasymcdot )\ \isasymzero \ \isasymone \ \isanewline
\ \ \ \ \ \ \ \ \ \ \ \ \ \ \ \ \ \ \ \ \ \ \ \ \ \ \isasymand \ \{a\ \isasymcdot \ b\ |a\ b.\ a\ \isasymin \ \isasymaa \ \isasymand \ b\ \isasymin \ \isasymbb \}\ \isasymsubseteq \ I\}"
\end{isabelle}

Given $\lbrace \mathfrak{a}_i \rbrace$ a family of ideals of $R$ indexed by an arbitrary set $I$, one defines the set $\displaystyle \sum_{i \in I} \mathfrak{a}_i$ to be the set 
	\[
	\lbrace a_1 + \dots + a_n \mid n \in \mathbb{N}, a_k \in \mathfrak{a}_{i_k} \rbrace .
	\]
of all finite sums of elements belonging to some ideals of the family. 
The set $\displaystyle \sum_{i \in I} \mathfrak{a}_i$, seen as an additive subgroup, is again an ideal of $R$.	

The so-called prime ideals are an important class of ideals.

\begin{definition}[prime ideal]
	A prime ideal is an ideal $\mathfrak{p} \neq R$ such that $x y \in \mathfrak{p}$ implies $x \in \mathfrak{p}$ or $y \in \mathfrak{p}$ for every elements $x$ and $y$ in $R$.
\end{definition}

\begin{isabelle}
\isacommand{locale}\ pr\_ideal\ =\ comm:comm\_ring\ R\ "(+)"\ "(\isasymcdot )"\ "\isasymzero "\ "\isasymone "\ \isanewline
\ \ \ \ \ \ \ \ \ \ \ \ \ \ \ \ \ \ \ \ \ \ \ \ +\ ideal\ I\ R\ "(+)"\ "(\isasymcdot )"\ "\isasymzero "\ "\isasymone "\isanewline
\ \ \ \ \isakeyword{for}\ R\ \isakeyword{and}\ I\ \isakeyword{and}\ addition\ (\isakeyword{infixl}\ "+"\ 65)\ \isakeyword{and}\ multiplication\ \isanewline
\ \ \ \ \ \ \ (\isakeyword{infixl}\ "\isasymcdot "\ 70)\ \isakeyword{and}\ zero\ ("\isasymzero ")\ \isakeyword{and}\ unit\ ("\isasymone ")\isanewline
\ \ \ +\ \isakeyword{assumes}\ carrier\_neq:\ "I\ \isasymnoteq \ R"\ \isanewline
\ \ \ \ \ \ \ \ \ \ \isakeyword{and}\ absorbent:\ "\isasymlbrakk x\ \isasymin \ R;\ y\ \isasymin \ R\isasymrbrakk \ \isasymLongrightarrow \ (x\ \isasymcdot \ y\ \isasymin \ I)\ \isanewline
\ \ \ \ \ \ \ \ \ \ \ \ \ \ \ \ \ \ \ \ \ \ \ \ \ \ \ \ \ \ \ \ \ \ \ \ \ \ \ \ \ \ \isasymLongrightarrow \ (x\ \isasymin \ I\ \isasymor \ y\ \isasymin \ I)"
\end{isabelle}

Our locale \textit{pr\_ideal} builds upon the locale \textit{ideal} for ideals (introduced previously in \cite{ballarin-exploring}) which does not assume the commutativity of the ring $R$. As a consequence, in order to assume the commutativity of the ring, we have to declare afresh the parameters of the locale instead of simply writing
\begin{isabelle}
\isacommand{locale}\ pr\_ideal\ =\ ideal\ +\isanewline
\ \ \ \  \isakeyword{assumes}\ carrier\_neq:\ "I\ \isasymnoteq \ R"\ \isanewline
\ \ \ \ \ \ \ \ \ \ \isakeyword{and}\ absorbent:\ "\isasymlbrakk x\ \isasymin \ R;\ y\ \isasymin \ R\isasymrbrakk \ \isasymLongrightarrow \ (x\ \isasymcdot \ y\ \isasymin \ I)\ \isanewline
\ \ \ \ \ \ \ \ \ \ \ \ \ \ \ \ \ \ \ \ \ \ \ \ \ \ \ \ \ \ \ \ \ \ \ \ \ \ \ \ \ \ \isasymLongrightarrow \ (x\ \isasymin \ I\ \isasymor \ y\ \isasymin \ I)"
\end{isabelle}
as our locale, although this last locale would be syntactically correct.
  
Note that if $\mathfrak{p}$ is a prime ideal, then $1 \notin \mathfrak{p}$.

\begin{isabelle}
\isacommand{lemma}\ (\isakeyword{in}\ pr\_ideal)\ not\_1:\ "\isasymone \ \isasymnotin \ I"\isanewline
\isacommand{proof}\isanewline
\ \ \isacommand{assume}\ "\isasymone \ \isasymin \ I"\isanewline
\ \ \isacommand{then}\ \isacommand{have}\ "\isasymAnd x.\ \isasymlbrakk \isasymone \ \isasymin \ I;\ x\ \isasymin \ R\isasymrbrakk \ \isasymLongrightarrow \ x\ \isasymin \ I"\isanewline
\ \ \ \ \isacommand{by}\ (metis\ ideal(1)\ comm.multiplicative.right\_unit)\isanewline
\ \ \isacommand{with}\ \isacartoucheopen \isasymone \ \isasymin \ I\isacartoucheclose \ \isacommand{have}\ "I\ =\ R"\isanewline
\ \ \ \ \isacommand{by}\ auto\isanewline
\ \ \isacommand{then}\ \isacommand{show}\ False\isanewline
\ \ \ \ \isacommand{using}\ carrier\_neq\ \isacommand{by}\ blast\isanewline
\isacommand{qed}
\end{isabelle}

Let $\mathfrak{p}$ be a prime ideal and $S$ be the complement of $\mathfrak{p}$ in $R$,  we prove $S$ is a multiplicative submonoid of $R$.

\begin{isabelle}
\isacommand{lemma}\ (\isakeyword{in}\ pr\_ideal)\ submonoid\_notin:\isanewline
\ \ \isakeyword{assumes}\ "S\ =\ \{x\ \isasymin \ R.\ x\ \isasymnotin \ I\}"\isanewline
\ \ \isakeyword{shows}\ "submonoid\ S\ R\ (\isasymcdot )\ \isasymone "
\end{isabelle}

If $\mathfrak{a}$ is an ideal of $R$, $V(\mathfrak{a})$ will denote the set of all prime ideals of $R$ which contain $\mathfrak{a}$. 

\begin{isabelle}
\isacommand{definition}\ (\isakeyword{in}\ comm\_ring)\ closed\_subsets\isanewline
\ \ \ \ \ \ \ \ \ \ \ \ \ ::\ "'a\ set\ \isasymRightarrow \ ('a\ set)\ set"\ ("\isasymV \ \_"\ [900]\ 900)\isanewline
\ \ \isakeyword{where}\ "\isasymV \ \isasymaa \ \isasymequiv \ \{I.\ pr\_ideal\ R\ I\ (+)\ (\isasymcdot )\ \isasymzero \ \isasymone \ \isasymand \ \isasymaa \ \isasymsubseteq \ I\}"
\end{isabelle}

Note that $V(R) = \emptyset$ and $V(\lbrace 0 \rbrace)$ is the set of all prime ideals of $R$, abbreviated \textit{Spec} in our code.

\begin{isabelle}
\isacommand{lemma}\ (\isakeyword{in}\ comm\_ring)\ closed\_subsets\_zero:\ "\isasymV \ \{\isasymzero \}\ =\ Spec"
\end{isabelle}

\begin{isabelle}
\isacommand{lemma}\ (\isakeyword{in}\ comm\_ring)\ closed\_subsets\_R:\ "\isasymV \ R\ =\ \{\}"
\end{isabelle}

Next, we define on the set of all primes ideals of $R$ the so-called \emph{Zariski topology}\label{zariskitop} given by the subsets of the form $V(\mathfrak{a})$ as the closed subsets. We then prove in Isabelle that the set $\text{Spec}\,R$ together with its Zariski topology is a topological space.

\begin{isabelle}
\isacommand{definition}\ (\isakeyword{in}\ comm\_ring)\ is\_zariski\_open\isanewline
\ \ \ \ ::\ "'a\ set\ set\ \isasymRightarrow \ bool"\ \isakeyword{where}\isanewline
\ \ "is\_zariski\_open\ U\ \isasymequiv \ generated\_topology\ Spec\ \isanewline
\ \ \ \ \ \ \ \ \{U.\ (\isasymexists \isasymaa .\ ideal\ \isasymaa \ R\ (+)\ (\isasymcdot )\ \isasymzero \ \isasymone \ \isasymand \ U\ =\ Spec\ -\ \isasymV \ \isasymaa )\}\ U"
\end{isabelle}

\begin{isabelle}
\isacommand{lemma}\ (\isakeyword{in}\ comm\_ring)\ zariski\_is\_topological\_space:\isanewline
\ \ "topological\_space\ Spec\ is\_zariski\_open"
\end{isabelle}

In the rest of this text $\text{Spec}\,R$ will denote the set of all prime ideals of $R$ equipped with its Zariski topology.

\subsubsection*{Sheaves of Rings}

We now teach Isabelle what \emph{presheaves of rings} are. 

\begin{definition}[presheaf of rings]
	Let $X$ be a topological space. A presheaf $\mathscr{F}$ of rings on $X$ consists of the following data:
	\begin{itemize}
		\item for every open set $U$, a ring $\mathscr{F}(U)$
		\item for every inclusion $V \subseteq U$ of open subsets, a morphism of rings $\rho_{UV}: \mathscr{F}(U) \rightarrow \mathscr{F}(V)$  
	\end{itemize}
satisfying 
	\begin{enumerate}
		\item $\mathscr{F}(\emptyset) = \lbrace 0 \rbrace$
		\item $\rho_{UU}$ is the identity map for every open subset $U$
		\item  If $W \subseteq V \subseteq U$ are three open subsets, then $\rho_{UW} = \rho_{VW} \circ \rho_{UV}$.
	\end{enumerate}
\end{definition}

\begin{isabelle}
\isacommand{locale}\ presheaf\_of\_rings\ =\ topological\_space\isanewline
\ \ +\ \isakeyword{fixes}\ \isasymFF ::\ "'a\ set\ \isasymRightarrow \ 'b\ set"\isanewline
\ \ \isakeyword{and}\ \isasymrho ::\ "'a\ set\ \isasymRightarrow \ 'a\ set\ \isasymRightarrow \ ('b\ \isasymRightarrow \ 'b)"\ \isakeyword{and}\ b::\ "'b"\isanewline
\ \ \isakeyword{and}\ add\_str::\ "'a\ set\ \isasymRightarrow \ ('b\ \isasymRightarrow \ 'b\ \isasymRightarrow \ 'b)"\ ("+\isactrlbsub \_\isactrlesub ")\isanewline
\ \ \isakeyword{and}\ mult\_str::\ "'a\ set\ \isasymRightarrow \ ('b\ \isasymRightarrow \ 'b\ \isasymRightarrow \ 'b)"\ ("\isasymcdot \isactrlbsub \_\isactrlesub ")\isanewline
\ \ \isakeyword{and}\ zero\_str::\ "'a\ set\ \isasymRightarrow \ 'b"\ ("\isasymzero \isactrlbsub \_\isactrlesub ")\ \isanewline
\ \ \isakeyword{and}\ one\_str::\ "'a\ set\ \isasymRightarrow \ 'b"\ ("\isasymone \isactrlbsub \_\isactrlesub ")\isanewline
\ \ \isakeyword{assumes}\ is\_ring\_morphism:\isanewline
\ \ \ \ "\isasymAnd U\ V.\ is\_open\ U\ \isasymLongrightarrow \ is\_open\ V\ \isasymLongrightarrow \ V\ \isasymsubseteq \ U\ \isanewline
\ \ \ \ \ \ \ \ \ \ \ \ \ \ \ \ \isasymLongrightarrow \ ring\_homomorphism\ (\isasymrho \ U\ V)\isanewline
\ \ \ \ \ \ \ \ \ \ \ \ \ \ \ \ \ \ \ \ \ \ \ \ \ \ (\isasymFF \ U)\ (+\isactrlbsub U\isactrlesub )\ (\isasymcdot \isactrlbsub U\isactrlesub )\ \isasymzero \isactrlbsub U\isactrlesub \ \isasymone \isactrlbsub U\isactrlesub \isanewline
\ \ \ \ \ \ \ \ \ \ \ \ \ \ \ \ \ \ \ \ \ \ \ \ \ \ (\isasymFF \ V)\ (+\isactrlbsub V\isactrlesub )\ (\isasymcdot \isactrlbsub V\isactrlesub )\ \isasymzero \isactrlbsub V\isactrlesub \ \isasymone \isactrlbsub V\isactrlesub "\isanewline
\ \ \isakeyword{and}\ ring\_of\_empty:\ "\isasymFF \ \{\}\ =\ \{b\}"\isanewline
\ \ \isakeyword{and}\ identity\_map\ [simp]:\ "\isasymAnd U.\ is\_open\ U\ \isanewline
\ \ \ \ \ \ \ \ \ \ \ \ \ \ \ \ \ \ \ \ \ \ \isasymLongrightarrow \ (\isasymAnd x.\ x\ \isasymin \ \isasymFF \ U\ \isasymLongrightarrow \ \isasymrho \ U\ U\ x\ =\ x)"\isanewline
\ \ \isakeyword{and}\ assoc\_comp:\ \isanewline
\ \ \ \ "\isasymAnd U\ V\ W\ x.\ \isasymlbrakk is\_open\ U;\ is\_open\ V;\ is\_open\ W;V\ \isasymsubseteq \ U;\ \isanewline
\ \ \ \ \ \ \ \ \ W\ \isasymsubseteq \ V;x\ \isasymin \ (\isasymFF \ U)\isasymrbrakk \ \isasymLongrightarrow \ \isasymrho \ U\ W\ x\ =\ (\isasymrho \ V\ W\ \isasymcirc \ \isasymrho \ U\ V)\ x"
\end{isabelle}

Within the locale the sets $\mathscr{F}(U)$'s are endowed with ring structures using the functional \textit{add\_str} (\textit{resp.} \textit{mult\_str}, \textit{zero\_str}, \textit{one\_str}) that takes an open subset $U$ and outputs the addition (\textit{resp.} the multiplication, the additive unit, the multiplicative unit) on the set $\mathscr{F}(U)$.
Ring homomorphisms have been defined in \cite{ballarin-exploring} as follows. 

\begin{isabelle}
	\isacommand{locale}\isamarkupfalse%
	\ ring{\isacharunderscore}{\kern0pt}homomorphism\ {\isacharequal}{\kern0pt}\isanewline
	\ \ map\ {\isasymeta}\ R\ R{\isacharprime}{\kern0pt}\ {\isacharplus}{\kern0pt}\ source{\isacharcolon}{\kern0pt}\ ring\ R\ {\isachardoublequoteopen}{\isacharparenleft}{\kern0pt}{\isacharplus}{\kern0pt}{\isacharparenright}{\kern0pt}{\isachardoublequoteclose}\ {\isachardoublequoteopen}{\isacharparenleft}{\kern0pt}{\isasymcdot}{\isacharparenright}{\kern0pt}{\isachardoublequoteclose}\ {\isasymzero}\ {\isasymone}\ \isanewline
	\ \ {\isacharplus}{\kern0pt}\ target{\isacharcolon}{\kern0pt}\ ring\ R{\isacharprime}{\kern0pt}\ {\isachardoublequoteopen}{\isacharparenleft}{\kern0pt}{\isacharplus}{\kern0pt}{\isacharprime}{\kern0pt}{\isacharparenright}{\kern0pt}{\isachardoublequoteclose}\ {\isachardoublequoteopen}{\isacharparenleft}{\kern0pt}{\isasymcdot}{\isacharprime}{\kern0pt}{\isacharparenright}{\kern0pt}{\isachardoublequoteclose}\ {\isachardoublequoteopen}{\isasymzero}{\isacharprime}{\kern0pt}{\isachardoublequoteclose}\ {\isachardoublequoteopen}{\isasymone}{\isacharprime}{\kern0pt}{\isachardoublequoteclose}\ \isanewline
	\ \ {\isacharplus}{\kern0pt}\ additive{\isacharcolon}{\kern0pt}\ group{\isacharunderscore}{\kern0pt}homomorphism\ {\isasymeta}\ R\ {\isachardoublequoteopen}{\isacharparenleft}{\kern0pt}{\isacharplus}{\kern0pt}{\isacharparenright}{\kern0pt}{\isachardoublequoteclose}\ {\isasymzero}\ R{\isacharprime}{\kern0pt}\ {\isachardoublequoteopen}{\isacharparenleft}{\kern0pt}{\isacharplus}{\kern0pt}{\isacharprime}{\kern0pt}{\isacharparenright}{\kern0pt}{\isachardoublequoteclose}\ {\isachardoublequoteopen}{\isasymzero}{\isacharprime}{\kern0pt}{\isachardoublequoteclose}\ \isanewline
	\ \ {\isacharplus}{\kern0pt}\ multiplicative{\isacharcolon}{\kern0pt}\ monoid{\isacharunderscore}{\kern0pt}homomorphism\ {\isasymeta}\ R\ {\isachardoublequoteopen}{\isacharparenleft}{\kern0pt}{\isasymcdot}{\isacharparenright}{\kern0pt}{\isachardoublequoteclose}\ {\isasymone}\ R{\isacharprime}{\kern0pt}\ {\isachardoublequoteopen}{\isacharparenleft}{\kern0pt}{\isasymcdot}{\isacharprime}{\kern0pt}{\isacharparenright}{\kern0pt}{\isachardoublequoteclose}\ {\isachardoublequoteopen}{\isasymone}{\isacharprime}{\kern0pt}{\isachardoublequoteclose}\isanewline
	\ \ \isakeyword{for}\ {\isasymeta}\ \isakeyword{and}\ R\ \isakeyword{and}\ addition\ {\isacharparenleft}{\kern0pt}\isakeyword{infixl}\ {\isachardoublequoteopen}{\isacharplus}{\kern0pt}{\isachardoublequoteclose}\ {\isadigit{6}}{\isadigit{5}}{\isacharparenright}{\kern0pt}\ \isakeyword{and}\ multiplication\ {\isacharparenleft}{\kern0pt}\isakeyword{infixl}\ {\isachardoublequoteopen}{\isasymcdot}{\isachardoublequoteclose}\ {\isadigit{7}}{\isadigit{0}}{\isacharparenright}{\kern0pt}\ \isanewline
	\ \ \ \ \isakeyword{and}\ zero\ {\isacharparenleft}{\kern0pt}{\isachardoublequoteopen}{\isasymzero}{\isachardoublequoteclose}{\isacharparenright}{\kern0pt}\ \isakeyword{and}\ unit\ {\isacharparenleft}{\kern0pt}{\isachardoublequoteopen}{\isasymone}{\isachardoublequoteclose}{\isacharparenright}{\kern0pt}\ \isakeyword{and}\ R{\isacharprime}{\kern0pt}\ \isakeyword{and}\ addition{\isacharprime}{\kern0pt}\ {\isacharparenleft}{\kern0pt}\isakeyword{infixl}\ {\isachardoublequoteopen}{\isacharplus}{\kern0pt}{\isacharprime}{\kern0pt}{\isacharprime}{\kern0pt}{\isachardoublequoteclose}\ {\isadigit{6}}{\isadigit{5}}{\isacharparenright}{\kern0pt}\ \isanewline
	\ \ \ \ \isakeyword{and}\ multiplication{\isacharprime}{\kern0pt}\ {\isacharparenleft}{\kern0pt}\isakeyword{infixl}\ {\isachardoublequoteopen}{\isasymcdot}{\isacharprime}{\kern0pt}{\isacharprime}{\kern0pt}{\isachardoublequoteclose}\ {\isadigit{7}}{\isadigit{0}}{\isacharparenright}{\kern0pt}\ \isakeyword{and}\ zero{\isacharprime}{\kern0pt}\ {\isacharparenleft}{\kern0pt}{\isachardoublequoteopen}{\isasymzero}{\isacharprime}{\kern0pt}{\isacharprime}{\kern0pt}{\isachardoublequoteclose}{\isacharparenright}{\kern0pt}\ \isakeyword{and}\ unit{\isacharprime}{\kern0pt}\ {\isacharparenleft}{\kern0pt}{\isachardoublequoteopen}{\isasymone}{\isacharprime}{\kern0pt}{\isacharprime}{\kern0pt}{\isachardoublequoteclose}{\isacharparenright}{\kern0pt}
\end{isabelle}

As expected the source and target of a ring homomorphism are rings.
As a consequence, the condition \textit{is\_ring\_morphism} within the locale \textit{presheaf\_of\_rings}, requiring $\rho_{UV}$ to be a ring homomorphism, implies that $\mathscr{F}(U)$ together with its structure is a ring for every open subset $U$ of the underlying topological space.

\begin{isabelle}
\isacommand{lemma}\ (\isakeyword{in}\ presheaf\_of\_rings)\ is\_ring\_from\_is\_homomorphism:\isanewline
\ \ \isakeyword{fixes}\ U::\ "'a\ set"\isanewline
\ \ \isakeyword{assumes}\ "is\_open\ U"\isanewline
\ \ \isakeyword{shows}\ "ring\ (\isasymFF \ U)\ (+\isactrlbsub U\isactrlesub )\ (\isasymcdot \isactrlbsub U\isactrlesub )\ \isasymzero \isactrlbsub U\isactrlesub \ \isasymone \isactrlbsub U\isactrlesub "
\end{isabelle}

\begin{notation}
	The elements of $\mathscr{F}(U)$ are sometimes called the sections of the presheaf $\mathscr{F}$ over $U$ and given $s \in \mathscr{F}(U)$, $s\restriction V$ denotes the element $\rho_{UV}(s)$.
\end{notation}

Of course, we have the corresponding notion of morphisms.

\begin{definition}[morphism of presheaves of rings]
	A morphism $\phi: \mathscr{F} \rightarrow \mathscr{F}'$ of presheaves of rings on a topological space $X$ is given by a morphism $\phi_U: \mathscr{F}(U) \rightarrow \mathscr{F}'(U)$ for each open subset $U$ of $X$ such that  the following diagram commutes
	\[
	\begin{tikzcd}
	\mathscr{F}(U) \arrow[r, "\phi_U"] \arrow[d, "\rho_{UV}"] & \mathscr{F}'(U) \arrow[d, "\rho'_{UV}"] \\
	\mathscr{F}(V) \arrow[r, "\phi_V"] & \mathscr{F}'(V) 
	\end{tikzcd}
	\]
	for every inclusion $V \subseteq U$.
\end{definition}

\begin{isabelle}
\isacommand{locale}\ morphism\_presheaves\_of\_rings\ =\isanewline
\ \ \ \ source:\ presheaf\_of\_rings\ X\ is\_open\ \isasymFF \ \isasymrho \ b\ add\_str\isanewline
\ \ \ \ \ \ \ \ \ \ \ \ \ \ \ \ \ \ \ \ \ \ \ \ \ \ \ \ \ \ \ \ mult\_str\ zero\_str\ one\_str\isanewline
\ \ +\ target:\ presheaf\_of\_rings\ X\ is\_open\ \isasymFF '\ \isasymrho '\ b'\ add\_str'\ \isanewline
\ \ \ \ \ \ \ \ \ \ \ \ \ \ \ \ \ \ \ \ \ \ \ \ \ \ \ \ \ mult\_str'\ zero\_str'\ one\_str'\isanewline
\ \ \isakeyword{for}\ X\ \isakeyword{and}\ is\_open\isanewline
\ \ \ \ \isakeyword{and}\ \isasymFF \ \isakeyword{and}\ \isasymrho \ \isakeyword{and}\ b\ \isakeyword{and}\ add\_str\ ("+\isactrlbsub \_\isactrlesub ")\ \isanewline
\ \ \ \ \isakeyword{and}\ mult\_str\ ("\isasymcdot \isactrlbsub \_\isactrlesub ")\ \isakeyword{and}\ zero\_str\ ("\isasymzero \isactrlbsub \_\isactrlesub ")\ \isanewline
\ \ \ \ \isakeyword{and}\ one\_str\ ("\isasymone \isactrlbsub \_\isactrlesub ")\ \isakeyword{and}\ \isasymFF '\ \isakeyword{and}\ \isasymrho '\ \isakeyword{and}\ b'\ \isanewline
\ \ \ \ \isakeyword{and}\ add\_str'\ ("+'{\kern0pt}'\isactrlbsub \_\isactrlesub ")\ \isakeyword{and}\ mult\_str'\ ("\isasymcdot '{\kern0pt}'\isactrlbsub \_\isactrlesub ")\isanewline
\ \ \ \ \isakeyword{and}\ zero\_str'\ ("\isasymzero '{\kern0pt}'\isactrlbsub \_\isactrlesub ")\ \isakeyword{and}\ one\_str'\ ("\isasymone '{\kern0pt}'\isactrlbsub \_\isactrlesub ")\ +\isanewline
\ \ \isakeyword{fixes}\ fam\_morphisms::\ "'a\ set\ \isasymRightarrow \ ('b\ \isasymRightarrow \ 'c)"\isanewline
\ \ \isakeyword{assumes}\ is\_ring\_morphism:\ \isanewline
\ \ \ \ "\isasymAnd U.\ is\_open\ U\ \isasymLongrightarrow \ ring\_homomorphism\ (fam\_morphisms\ U)\isanewline
\ \ \ \ \ \ \ \ \ \ \ \ \ \ \ \ \ \ \ \ \ \ \ \ \ \ \ \ \ \ (\isasymFF \ U)\ (+\isactrlbsub U\isactrlesub )\ (\isasymcdot \isactrlbsub U\isactrlesub )\ \isasymzero \isactrlbsub U\isactrlesub \ \isasymone \isactrlbsub U\isactrlesub \isanewline
\ \ \ \ \ \ \ \ \ \ \ \ \ \ \ \ \ \ \ \ \ \ \ \ \ (\isasymFF '\ U)\ (+'\isactrlbsub U\isactrlesub )\ (\isasymcdot '\isactrlbsub U\isactrlesub )\ \isasymzero '\isactrlbsub U\isactrlesub \ \isasymone '\isactrlbsub U\isactrlesub "\isanewline
\ \ \ \ \isakeyword{and}\ comm\_diagrams:\ \isanewline
\ \ \ \ "\isasymAnd U\ V\ x.\ \isasymlbrakk is\_open\ U;\ is\_open\ V;\ V\ \isasymsubseteq \ U;x\ \isasymin \ \isasymFF \ U\ \isasymrbrakk \isanewline
\ \ \ \ \ \ \ \ \ \ \ \ \ \ \isasymLongrightarrow \ (\isasymrho '\ U\ V\ \isasymcirc \ fam\_morphisms\ U)\ x\ \isanewline
\ \ \ \ \ \ \ \ \ \ \ \ \ \ \ \ \ \ \ \ \ \ \ \ \ \ \ \ \ \ =\ (fam\_morphisms\ V\ \isasymcirc \ \isasymrho \ U\ V)\ x"
\end{isabelle}

In the snippet of code above, we see the inheritance of locales in action. Indeed, the locale \textit{presheaf\_of\_rings} inherits multiple instances, one for the source and one for the target of the notion of morphism being defined.
The notion of composition for morphisms of presheaves is straightforward and we check in Isabelle without any difficulty that the composition is again a morphism between presheaves of rings. \\
Indeed,	let $\phi: \mathscr{F} \rightarrow \mathscr{G}$ (\textit{resp.} $\psi: \mathscr{G} \rightarrow \mathscr{H}$) be a morphism of presheaves of rings on a topological space $X$. One defines the composition $\psi \circ \phi$ from $\mathscr{F}$ to $\mathscr{H}$ as 
	\[
	(\psi \circ \phi)_U \coloneqq \psi_U \circ \phi_U
	\]
	for every open subset $U$ of $X$.

\begin{isabelle}
\isacommand{lemma}\ comp\_of\_presheaves:\isanewline
\ \ \isakeyword{assumes}\ "morphism\_presheaves\_of\_rings\ X\ is\_open\ \isasymFF \ \isasymrho \ b\ \isanewline
\ \ \ \ \ \ \ \ \ \ \ \ add\_str\ mult\_str\ zero\_str\ one\_str\ \isasymFF '\ \isasymrho '\ b'\ \isanewline
\ \ \ \ \ \ \ \ \ \ \ \ \ \ \ \ add\_str'\ mult\_str'\ zero\_str'\ one\_str'\ \isasymphi "\isanewline
\ \ \ \ \isakeyword{and}\ "morphism\_presheaves\_of\_rings\ X\ is\_open\ \isasymFF '\ \isasymrho '\ b'\isanewline
\ \ \ \ \ \ \ \ \ \ add\_str'\ mult\_str'\ zero\_str'\ one\_str'\ \isasymFF '{\kern0pt}'\ \isasymrho '{\kern0pt}'\ \isanewline
\ \ \ \ \ \ \ \ \ \ b'{\kern0pt}'\ add\_str'{\kern0pt}'\ mult\_str'{\kern0pt}'\ zero\_str'{\kern0pt}'\ one\_str'{\kern0pt}'\ \isasymphi '"\isanewline
\ \ \isakeyword{shows}\ "morphism\_presheaves\_of\_rings\ X\ is\_open\ \isasymFF \ \isasymrho \ b\ add\_str\isanewline
\ \ \ \ \ \ mult\_str\ zero\_str\ one\_str\ \isasymFF '{\kern0pt}'\ \isasymrho '{\kern0pt}'\ b'{\kern0pt}'\ add\_str'{\kern0pt}'\ \isanewline
\ \ \ \ \ \ mult\_str'{\kern0pt}'\ zero\_str'{\kern0pt}'\ one\_str'{\kern0pt}'\ (\isasymlambda U.\ (\isasymphi '\ U\ \isasymcirc \ \isasymphi \ U\ \isasymdown \ \isasymFF \ U))"
\end{isabelle}

The syntax $g \circ f \downarrow D$, used in the last line of the above code, is simply some syntactic sugar for the definition of the composition $g \circ f$ of two maps $f$ and $g$ on the domain $D$.  

\begin{remark}
	Given a presheaf $\mathscr{F}$ on $X$, let us define $(1_{\mathscr{F}})_U$ as the identity morphism of $\mathscr{F}(U)$ for every open subset $U$ of $X$. The family $1_{\mathscr{F}}$ is obviously a morphism of presheaves from $\mathscr{F}$ to itself. A morphism $\phi: \mathscr{F} \rightarrow \mathscr{G}$ of presheaves of rings is an isomorphism if and only if there exists a morphism $\psi: \mathscr{G} \rightarrow \mathscr{F}$ such that $\psi \circ \phi = 1_{\mathscr{F}}$ and $\phi \circ \psi = 1_{\mathscr{G}}$.  
\end{remark}

\begin{isabelle}
\isacommand{locale}\ iso\_presheaves\_of\_rings\ \isanewline
\ \ =\ mor:morphism\_presheaves\_of\_rings\ +\ \isanewline
\ \ \isakeyword{assumes}\ is\_inv:\ "\isasymexists \isasympsi .\ morphism\_presheaves\_of\_rings\ X\ is\_open\ \isanewline
\ \ \ \ \isasymFF '\ \isasymrho '\ b'\ add\_str'\ mult\_str'\ zero\_str'\ one\_str'\ \isasymFF \ \isasymrho \ b\ \isanewline
\ \ \ \ add\_str\ mult\_str\ zero\_str\ one\_str\ \isasympsi \ \isasymand \ (\isasymforall U.\ is\_open\ U\ \isanewline
\ \ \ \ \ \ \ \ \ \ \isasymlongrightarrow \ (\isasymforall x\ \isasymin \ (\isasymFF '\ U).\ (fam\_morphisms\ U\ \isasymcirc \ \isasympsi \ U)\ x\ =\ x)\ \isanewline
\ \ \ \ \ \ \ \ \ \ \ \ \ \ \ \ \isasymand \ (\isasymforall x\ \isasymin \ (\isasymFF \ U).\ (\isasympsi \ U\ \isasymcirc \ fam\_morphisms\ U)\ x\ =\ x))"
\end{isabelle}

Now, we introduce the notion of a \emph{sheaf of rings}. 			

\begin{definition}[sheaf of rings]
	A sheaf of rings on a topological space $X$ is a presheaf of rings on $X$ that satisfies the following additional properties:
	\begin{enumerate}
		\item[(locality)] Given an open set $U$, $\lbrace V_i \rbrace$ an open covering of $U$ and $s \in \mathscr{F}(U)$ such that $s \restriction V_i = 0$ for all $i$, then one has $s = 0$.
		\item[(glueing)] Given an open set $U$, $\lbrace V_i \rbrace$ an open covering of $U$ and elements $s_i \in \mathscr{F}(V_i)$ satisfying the property $s_i \restriction V_i \cap V_j = s_j \restriction V_i \cap V_j$ for all $i$ and $j$, then there exists an element $s \in \mathscr{F}(U)$ such that $s \restriction V_i = s_i$ for all $i$.  
	\end{enumerate}	
\end{definition}

The resulting formalization of sheaves of rings in Isabelle is concise (10 lines), tidy and almost transparent with respect to the pen-and-paper definition.

\begin{isabelle}
\isacommand{locale}\ sheaf\_of\_rings\ =\ presheaf\_of\_rings\ +\isanewline
\ \ \isakeyword{assumes}\ locality:\ "\isasymAnd U\ I\ V\ s.\ open\_cover\_of\_open\_subset\ S\ \isanewline
\ \ \ \ \ \ \ \ \ \ is\_open\ U\ I\ V\ \isasymLongrightarrow \ (\isasymAnd i.\ i\isasymin I\ \isasymLongrightarrow \ V\ i\ \isasymsubseteq \ U)\ \isasymLongrightarrow \ s\ \isasymin \ \isasymFF \ U\isanewline
\ \ \ \ \ \ \ \ \ \ \ \isasymLongrightarrow \ (\isasymAnd i.\ i\isasymin I\ \isasymLongrightarrow \ \isasymrho \ U\ (V\ i)\ s\ =\ \isasymzero \isactrlbsub (V\ i)\isactrlesub )\ \isasymLongrightarrow \ s\ =\ \isasymzero \isactrlbsub U\isactrlesub "\isanewline
\ \ \ \ \isakeyword{and}\ glueing:\ \ \isanewline
\ \ \ \ \ \ "\isasymAnd U\ I\ V\ s.\ open\_cover\_of\_open\_subset\ S\ is\_open\ U\ I\ V\ \isanewline
\ \ \ \ \ \ \ \ \isasymLongrightarrow \ (\isasymforall i.\ i\isasymin I\ \isasymlongrightarrow \ V\ i\ \isasymsubseteq \ U\ \isasymand \ s\ i\ \isasymin \ \isasymFF \ (V\ i))\ \isanewline
\ \ \ \ \ \ \ \ \isasymLongrightarrow \ (\isasymAnd i\ j.\ i\isasymin I\ \isasymLongrightarrow \ j\isasymin I\ \isasymLongrightarrow \ \isasymrho \ (V\ i)\ (V\ i\ \isasyminter \ V\ j)\ (s\ i)\isanewline
\ \ \ \ \ \ \ \ \ \ \ \ \ \ \ \ \ \ =\ \isasymrho \ (V\ j)\ (V\ i\ \isasyminter \ V\ j)\ (s\ j))\ \isanewline
\ \ \ \ \ \ \ \ \isasymLongrightarrow \ (\isasymexists t.\ t\ \isasymin \ \isasymFF \ U\ \isasymand \ (\isasymforall i.\ i\isasymin I\ \isasymlongrightarrow \ \isasymrho \ U\ (V\ i)\ t\ =\ s\ i))"
\end{isabelle}

A morphism (\textit{resp.} an isomorphism) of sheaves of rings is nothing but a morphism (\textit{resp.} an isomorphism) of presheaves of rings, hence the following locale in Isabelle.

\begin{isabelle}
\isacommand{locale}\ morphism\_sheaves\_of\_rings\ =\ morphism\_presheaves\_of\_rings
\end{isabelle}

Let $X$ be a topological space, $\mathscr{F}$ a sheaf of rings on $X$ and $U$ an open subset of $X$,  one proves that the restriction of $\mathscr{F}$ to $U$, seen as a topological subspace with respect to the induced topology, is a sheaf of rings. In our formal development this sheaf is called the induced sheaf and shorten \textit{ind\_sheaf}.
We encapsulate the relevant mathematical context (the ``let be'' part) in a dedicated locale and formalize within this locale the relevant definitions and we eventually prove the \emph{induced sheaf $\mathscr{F}|_U$} is indeed a sheaf of rings.
		
\begin{isabelle}
\isacommand{locale}\ ind\_sheaf\ =\ sheaf\_of\_rings\ +\isanewline
\ \ \isakeyword{fixes}\ U::\ "'a\ set"\isanewline
\ \ \isakeyword{assumes}\ is\_open\_subset:\ "is\_open\ U"\isanewline
\isakeyword{begin}\isanewline
\isanewline
\isacommand{interpretation}\ it:\ ind\_topology\ S\ is\_open\ U\isanewline

\isacommand{definition}\ ind\_sheaf::\ "'a\ set\ \isasymRightarrow \ 'b\ set"\isanewline
\ \ \isakeyword{where}\ "ind\_sheaf\ V\ \isasymequiv \ \isasymFF \ (U\ \isasyminter \ V)"\isanewline
\isanewline
\isacommand{definition}\ ind\_ring\_morphisms::\ "'a\ set\ \isasymRightarrow \ 'a\ set\ \isasymRightarrow \ ('b\ \isasymRightarrow \ 'b)"\isanewline
\ \ \isakeyword{where}\ "ind\_ring\_morphisms\ V\ W\ \isasymequiv \ \isasymrho \ (U\ \isasyminter \ V)\ (U\ \isasyminter \ W)"\isanewline
\isanewline
\isacommand{definition}\ ind\_add\_str::\ "'a\ set\ \isasymRightarrow \ ('b\ \isasymRightarrow \ 'b\ \isasymRightarrow \ 'b)"\isanewline
\ \ \isakeyword{where}\ "ind\_add\_str\ V\ \isasymequiv \ \isasymlambda x\ y.\ +\isactrlbsub (U\ \isasyminter \ V)\isactrlesub \ x\ y"\isanewline
\isanewline
\isacommand{definition}\ ind\_mult\_str::\ "'a\ set\ \isasymRightarrow \ ('b\ \isasymRightarrow \ 'b\ \isasymRightarrow \ 'b)"\isanewline
\ \ \isakeyword{where}\ "ind\_mult\_str\ V\ \isasymequiv \ \isasymlambda x\ y.\ \isasymcdot \isactrlbsub (U\ \isasyminter \ V)\isactrlesub \ x\ y"\isanewline
\isanewline
\isacommand{definition}\ ind\_zero\_str::\ "'a\ set\ \isasymRightarrow \ 'b"\isanewline
\ \ \isakeyword{where}\ "ind\_zero\_str\ V\ \isasymequiv \ \isasymzero \isactrlbsub (U\isasyminter V)\isactrlesub "\isanewline
\isanewline
\isacommand{definition}\ ind\_one\_str::\ "'a\ set\ \isasymRightarrow \ 'b"\isanewline
\ \ \isakeyword{where}\ "ind\_one\_str\ V\ \isasymequiv \ \isasymone \isactrlbsub (U\isasyminter V)\isactrlesub "\isanewline
\isanewline
\isacommand{lemma}\ ind\_sheaf\_is\_sheaf:\isanewline
\ \ "sheaf\_of\_rings\ U\ it.ind\_is\_open\ ind\_sheaf\ ind\_ring\_morphisms\ \isanewline
\ \ \ \ \ \ \ \ \ \ \ \ b\ ind\_add\_str\ ind\_mult\_str\ ind\_zero\_str\ ind\_one\_str"\isanewline

\isacommand{end}
\end{isabelle}
In the code above the \textbf{interpretation} mechanism for locales makes possible to instantiate the locale \textit{ind\_topology}, for the induced topology on a subset, with a list of arguments. Then it becomes possible to refer later to the induced topology on the open subset $U$ with the abbreviation \textit{it}.

We introduce a second construction that takes as input a given sheaf of rings and outputs a new sheaf of rings. Let $f: X \rightarrow Y$ be a continuous map between topological spaces and $\mathscr{F}$ a sheaf of rings on $X$. Given an open subset $V$ of $Y$, define $(f_{*} \mathscr{F})(V)$ to be $\mathscr{F} (f^{-1}(V))$. We then prove that $f_{*} \mathscr{F}$, together with the obvious structure, is a sheaf of rings on the topological space $Y$. The sheaf $f_{*} \mathscr{F}$ is called the \emph{direct image of  $\mathscr{F}$}.  

\begin{isabelle}
\isacommand{locale}\ im\_sheaf\ =\ sheaf\_of\_rings\ +\ continuous\_map\isanewline
\isakeyword{begin}\isanewline
\isanewline
\isacommand{definition}\ im\_sheaf::\ "'c\ set\ =>\ 'b\ set"\isanewline
\ \ \isakeyword{where}\ "im\_sheaf\ V\ \isasymequiv \ \isasymFF \ (f\isactrlsup \isasyminverse \ S\ V)"\isanewline
\isanewline
\isacommand{definition}\ im\_sheaf\_morphisms::\ "'c\ set\ \isasymRightarrow \ 'c\ set\ \isasymRightarrow \ ('b\ \isasymRightarrow \ 'b)"\isanewline
\ \ \isakeyword{where}\ "im\_sheaf\_morphisms\ U\ V\ \isasymequiv \ \isasymrho \ (f\isactrlsup \isasyminverse \ S\ U)\ (f\isactrlsup \isasyminverse \ S\ V)"\isanewline
\isanewline
\isacommand{definition}\ add\_im\_sheaf::\ "'c\ set\ \isasymRightarrow \ 'b\ \isasymRightarrow \ 'b\ \isasymRightarrow \ 'b"\isanewline
\ \ \isakeyword{where}\ "add\_im\_sheaf\ \isasymequiv \ \isasymlambda V\ x\ y.\ +\isactrlbsub (f\isactrlsup \isasyminverse \ S\ V)\isactrlesub \ x\ y"\isanewline
\isanewline
\isacommand{definition}\ mult\_im\_sheaf::\ "'c\ set\ \isasymRightarrow \ 'b\ \isasymRightarrow \ 'b\ \isasymRightarrow \ 'b"\isanewline
\ \ \isakeyword{where}\ "mult\_im\_sheaf\ \isasymequiv \ \isasymlambda V\ x\ y.\ \isasymcdot \isactrlbsub (f\isactrlsup \isasyminverse \ S\ V)\isactrlesub \ x\ y"\isanewline
\isanewline
\isacommand{definition}\ zero\_im\_sheaf::\ "'c\ set\ \isasymRightarrow \ 'b"\isanewline
\ \ \isakeyword{where}\ "zero\_im\_sheaf\ \isasymequiv \ \isasymlambda V.\ \isasymzero \isactrlbsub (f\isactrlsup \isasyminverse \ S\ V)\isactrlesub "\isanewline
\isanewline
\isacommand{definition}\ one\_im\_sheaf::\ "'c\ set\ \isasymRightarrow \ 'b"\isanewline
\ \ \isakeyword{where}\ "one\_im\_sheaf\ \isasymequiv \ \isasymlambda V.\ \isasymone \isactrlbsub (f\isactrlsup \isasyminverse \ S\ V)\isactrlesub "\isanewline
\isanewline
\isacommand{lemma}\ im\_sheaf\_is\_sheaf:\isanewline
\ \ "sheaf\_of\_rings\ S'\ is\_open'\ im\_sheaf\ im\_sheaf\_morphisms\ b\isanewline
\ \ \ \ \ \ \ \ \ \ add\_im\_sheaf\ mult\_im\_sheaf\ zero\_im\_sheaf\ one\_im\_sheaf"\isanewline
\isacommand{end}
\end{isabelle}

As can been seen in the snippet of code above, we encapsulate again the context of the construction in a dedicated locale and this lets us formalize the relevant definitions in this context and we eventually prove the direct image of a sheaf is again a sheaf. 

\subsubsection*{Localizations of Rings}

Next, we introduce the localization of commutative rings.
Let $S$ be a multiplicative submonoid of $R$. We consider pairs $(r, s)$ with $r \in R$ and $s \in S$ and we define the following relation on them
	\[
	(r, s) \sim (r', s')
	\]
if and only if there exists $s_1 \in S$ such that $s_1(s' r - s r') = 0$. One checks that the relation $\sim$ is an equivalence relation. The equivalence class of a pair $(r, s)$ is denoted by $r/s$ and the set of equivalence classes is denoted by $S^{-1} R$.
We define the following addition on $S^{-1} R$.
	\[
	\frac{r}{s} + \frac{r'}{s'} = \frac{r s' + r' s}{s s'}
	\]
We also define the following multiplication on $S^{-1} R$. 
	\[
	\frac{r}{s} \times \frac{r'}{s'} = \frac{r r'}{s s'}
	\]
One checks these operations are well-defined with $(S^{-1} R, 0/1, +, 1/1, \times)$ forming a ring. Following Serge Lang \cite[II.4]{lang2002algebra} we call the new ring $S^{-1} R$ the \emph{quotient ring of $R$ by $S$}\footnote{It is also called the \textit{ring of fractions of $R$ by $S$} or the \textit{localization of $R$ at $S$}. No confusion with the \textit{factor ring} should be possible, since $S$ is not an ideal of $R$ but a multiplicative submonoid.}.
This construction translates into the following locale.

\begin{isabelle}
\isacommand{locale}\ quotient\_ring\ =\ comm:comm\_ring\ R\ "(+)"\ "(\isasymcdot )"\ "\isasymzero "\ "\isasymone "\ \isanewline
\ \ \ \ \ \ \ \ \ \ +\ submonoid\ S\ R\ "(\isasymcdot )"\ "\isasymone "\isanewline
\ \ \ \ \ \ \isakeyword{for}\ S\ \isakeyword{and}\ R\ \isakeyword{and}\ addition\ (\isakeyword{infixl}\ "+"\ 65)\ \isanewline
\ \ \ \ \ \ \ \ \isakeyword{and}\ multiplication\ (\isakeyword{infixl}\ "\isasymcdot "\ 70)\ \isakeyword{and}\ zero\ ("\isasymzero ")\ \isakeyword{and}\ unit\ ("\isasymone ")\isanewline
\isakeyword{begin}\isanewline
\isanewline
\isacommand{definition}\ rel::\ "('a\ \isasymtimes \ 'a)\ \isasymRightarrow \ ('a\ \isasymtimes \ 'a)\ \isasymRightarrow \ bool"\ (\isakeyword{infix}\ "\isasymsim "\ 80)\isanewline
\ \ \isakeyword{where}\ "x\ \isasymsim \ y\ \isasymequiv \ \isasymexists s1.\ s1\ \isasymin \ S\ \isasymand \ s1\ \isasymcdot \ (snd\ y\ \isasymcdot \ fst\ x\ -\ snd\ x\ \isasymcdot \ fst\ y)\ =\ \isasymzero "\isanewline
\isanewline
\isacommand{interpretation}\ rel:\ equivalence\ "R\ \isasymtimes \ S"\ "\{(x,y)\ \isasymin \ (R\isasymtimes S)\isasymtimes (R\isasymtimes S).\ x\ \isasymsim \ y\}"\isanewline
\isanewline
\isacommand{definition}\ frac::\ "'a\ \isasymRightarrow \ 'a\ \isasymRightarrow \ ('a\ \isasymtimes \ 'a)\ set"\ (\isakeyword{infixl}\ "'/"\ 75)\isanewline
\ \ \isakeyword{where}\ "r\ /\ s\ \isasymequiv \ rel.Class\ (r,\ s)"\isanewline
\isanewline
\isacommand{lemma}\ add\_rel\_frac:\isanewline
\ \ \isakeyword{assumes}\ "(r,s)\ \isasymin \ R\ \isasymtimes \ S"\ \isakeyword{and}\ "(r',s')\isasymin \ R\ \isasymtimes \ S"\isanewline
\ \ \isakeyword{shows}\ "add\_rel\ (r/s)\ (r'/s')\ =\ (r\isasymcdot s'\ +\ r'\isasymcdot s)\ /\ (s\isasymcdot s')"
\isanewline
\isanewline
\isacommand{lemma}\ mult\_rel\_frac:\isanewline
\ \ \isakeyword{assumes}\ "(r,s)\ \isasymin \ R\ \isasymtimes \ S"\ \isakeyword{and}\ "(r',s')\isasymin \ R\ \isasymtimes \ S"\isanewline
\ \ \isakeyword{shows}\ "mult\_rel\ (r/s)\ (r'/s')\ =\ (r\isasymcdot \ r')\ /\ (s\isasymcdot s')"
\isanewline
\isanewline
\isacommand{definition}\ zero\_rel::"('a\ \isasymtimes \ 'a)\ set"\ \isakeyword{where}\isanewline
\ \ "zero\_rel\ =\ frac\ \isasymzero \ \isasymone "\isanewline
\isanewline
\isacommand{definition}\ one\_rel::"('a\ \isasymtimes \ 'a)\ set"\ \isakeyword{where}\isanewline
\ \ "one\_rel\ =\ frac\ \isasymone \ \isasymone "\isanewline
\isanewline
\isacommand{definition}\isamarkupfalse%
\ carrier{\isacharunderscore}{\kern0pt}quotient{\isacharunderscore}{\kern0pt}ring{\isacharcolon}{\kern0pt}{\isacharcolon}{\kern0pt}\ {\isachardoublequoteopen}{\isacharparenleft}{\kern0pt}{\isacharprime}{\kern0pt}a\ {\isasymtimes}\ {\isacharprime}{\kern0pt}a{\isacharparenright}{\kern0pt}\ set\ set{\isachardoublequoteclose}\isanewline
\ \ \isakeyword{where}\ {\isachardoublequoteopen}carrier{\isacharunderscore}{\kern0pt}quotient{\isacharunderscore}{\kern0pt}ring\ {\isasymequiv}\ rel{\isachardot}{\kern0pt}Partition{\isachardoublequoteclose}\isanewline
\
\isanewline
\isacommand{sublocale}\ comm\_ring\ carrier\_quotient\_ring\ add\_rel\ mult\_rel\ zero\_rel\ one\_rel\isanewline
\isanewline
\isacommand{end}
\end{isabelle}

The code above ends with a proof that the locale \textit{quotient\_ring} is a \textbf{sublocale} of the locale \textit{comm\_ring}, \textit{i.e.}, a proof that the quotient of a commutative ring by a multiplicative submonoid is a commutative ring.
Remember that the complement of a prime ideal $\mathfrak{p}$ in $R$ is a multiplicative submonoid, hence we can apply the previous construction with $S \coloneqq R - \mathfrak{p}$. The resulting ring  $S^{-1} R$ is called the \emph{local ring of $R$ at $\mathfrak{p}$}.

\begin{isabelle}
\isacommand{context}\ pr\_ideal\isanewline
\isakeyword{begin}\isanewline
\isanewline
\isacommand{lemma}\ submonoid\_pr\_ideal:\isanewline
\ \ \isakeyword{shows}\ "submonoid\ (R\ \isasymsetminus \ I)\ R\ (\isasymcdot )\ \isasymone "\isanewline
\isacommand{proof}\isanewline
\ \ \isacommand{show}\ "a\ \isasymcdot \ b\ \isasymin \ R\isasymsetminus I"\ \isakeyword{if}\ "a\ \isasymin \ R\isasymsetminus I"\ "b\ \isasymin \ R\isasymsetminus I"\ \isakeyword{for}\ a\ b\isanewline
\ \ \ \ \isacommand{using}\ that\ \isacommand{by}\ (metis\ Diff\_iff\ absorbent\ comm.multiplicative.composition\_closed)\isanewline
\ \ \isacommand{show}\ "\isasymone \ \isasymin \ R\isasymsetminus I"\isanewline
\ \ \ \ \isacommand{using}\ ideal.ideal(2)\ ideal\_axioms\ pr\_ideal.carrier\_neq\ pr\_ideal\_axioms\ \isanewline
\ \ \ \ \isacommand{by}\ fastforce\isanewline
\isacommand{qed}\ auto%
\isanewline
\isanewline
\isacommand{interpretation}\ local:quotient\_ring\ "(R\ \isasymsetminus \ I)"\ R\ "(+)"\ "(\isasymcdot )"\ \isasymzero \ \isasymone \isanewline
\ \ \isacommand{apply}\ intro\_locales\isanewline
\ \ \isacommand{by}\ (meson\ submonoid\_def\ submonoid\_pr\_ideal)

\isanewline
\isacommand{definition}\ carrier\_local\_ring\_at::\ "('a\ \isasymtimes \ 'a)\ set\ set"\isanewline
\ \ \isakeyword{where}\ "carrier\_local\_ring\_at\ \isasymequiv \ (R\ \isasymsetminus \ I)\isactrlsup \isasyminverse \ R\isactrlbsub (+)\ (\isasymcdot )\ \isasymzero \isactrlesub "\isanewline
\isanewline
\isacommand{definition}\ add\_local\_ring\_at::\ "('a\ \isasymtimes \ 'a)\ set\ \isasymRightarrow \ ('a\ \isasymtimes \ 'a)\ set\ \isasymRightarrow \ ('a\ \isasymtimes \ 'a)\ set"\isanewline
\ \ \isakeyword{where}\ "add\_local\_ring\_at\ \isasymequiv \ local.add\_rel\ "\isanewline
\isanewline
\isacommand{definition}\ mult\_local\_ring\_at::\ "('a\ \isasymtimes \ 'a)\ set\ \isasymRightarrow \ ('a\ \isasymtimes \ 'a)\ set\ \isasymRightarrow \ ('a\ \isasymtimes \ 'a)\ set"\isanewline
\ \ \isakeyword{where}\ "mult\_local\_ring\_at\ \isasymequiv \ local.mult\_rel\ "\isanewline
\isanewline
\isacommand{definition}\ uminus\_local\_ring\_at::\ "('a\ \isasymtimes \ 'a)\ set\ \isasymRightarrow \ ('a\ \isasymtimes \ 'a)\ set"\isanewline
\ \ \isakeyword{where}\ "uminus\_local\_ring\_at\ \isasymequiv \ local.uminus\_rel\ "\isanewline
\isanewline
\isacommand{definition}\ zero\_local\_ring\_at::\ "('a\ \isasymtimes \ 'a)\ set"\isanewline
\ \ \isakeyword{where}\ "zero\_local\_ring\_at\ \isasymequiv \ local.zero\_rel"\isanewline
\isanewline
\isacommand{definition}\ one\_local\_ring\_at::\ "('a\ \isasymtimes \ 'a)\ set"\isanewline
\ \ \isakeyword{where}\ "one\_local\_ring\_at\ \isasymequiv \ local.one\_rel"\isanewline
\isanewline
\isacommand{end}
\end{isabelle}

This allows us to introduce the next construction.

\subsubsection*{The Spectrum of a Ring}

Let $U$ be an open subset of $\text{Spec}\,R$. We define $\mathscr{O}_{\spec} (U)$ to be the set of all functions 
	\[
	s: U \rightarrow \coprod \limits_{\mathfrak{p} \in U} R_{\mathfrak{p}}
	\]
such that $s(\mathfrak{p}) \in R_{\mathfrak{p}}$ for every $\mathfrak{p} \in U$ and such that for every $\mathfrak{p} \in U$, there exist a neighborhood $V$ of $\mathfrak{p}$, contained in $U$, and elements $r, f \in R$, such that for each $\mathfrak{q} \in V$, $f \notin \mathfrak{q}$ and $s(\mathfrak{q}) = r/f$.
We can prove that $\mathscr{O}_{\spec} (U)$ is closed under the addition and multiplication of such functions and we take for the additive unit (\textit{resp.} multiplicative unit) the function that maps each $\mathfrak{p}$ on the additive unit (\textit{resp.} multiplicative unit) of $R_{\mathfrak{p}}$.
Last, if $V \subseteq U$, then the morphism of rings $\mathscr{O}_{\spec} (U) \rightarrow \mathscr{O}_{\spec} (V)$ maps each $s \in \mathscr{O}_{\spec} (U)$ on its restriction to $V$.
One checks $\mathscr{O}_{\spec} $ is a sheaf of rings on $\text{Spec}\,R$ and the pair $(\text{Spec}\,R, \mathscr{O}_{\spec} )$ is called the \emph{spectrum of the ring $R$}.
This last construction translates smoothly in Isabelle. 

\begin{isabelle}
\isacommand{definition}\ (\isakeyword{in}\ comm\_ring)\ \isanewline
\ \ \ \ is\_locally\_frac::\ "('a\ set\ \isasymRightarrow \ ('a\ \isasymtimes \ 'a)\ set)\ \isasymRightarrow \ 'a\ set\ set\ \isasymRightarrow \ bool"\isanewline
\ \ \ \ \isakeyword{where}\ "is\_locally\_frac\ s\ V\ \isasymequiv \ (\isasymexists r\ f.\ r\ \isasymin \ R\ \isasymand \ f\ \isasymin \ R\ \isasymand \ (\isasymforall \isasymqq \ \isasymin \ V.\ \isanewline
\ \ \ \ \ \ \ \ \ \ \ \ f\ \isasymnotin \ \isasymqq \ \isasymand \ s\ \isasymqq \ =\ quotient\_ring.frac\ (R\ \isasymsetminus \ \isasymqq )\ R\ (+)\ (\isasymcdot )\ \isasymzero \ r\ f))"\isanewline
\isanewline
\isacommand{definition}\ (\isakeyword{in}\ comm\_ring)\ is\_regular\isanewline
\ \ \ \ \ \ \ \ \ \ \ \ \ \ ::\ "('a\ set\ \isasymRightarrow \ ('a\ \isasymtimes \ 'a)\ set)\ \isasymRightarrow \ 'a\ set\ set\ \isasymRightarrow \ bool"\isanewline
\ \ \isakeyword{where}\ "is\_regular\ s\ U\ \isasymequiv \ \isasymforall \isasympp .\ \isasympp \ \isasymin \ U\ \isasymlongrightarrow \ \isanewline
\ \ (\isasymexists V.\ is\_zariski\_open\ V\ \isasymand \ V\ \isasymsubseteq \ U\ \isasymand \ \isasympp \ \isasymin \ V\ \isasymand \ (is\_locally\_frac\ s\ V))"\isanewline
\isanewline
\isacommand{definition}\ (\isakeyword{in}\ comm\_ring)\ sheaf\_spec\isanewline
\ \ \ \ ::\ "'a\ set\ set\ \isasymRightarrow \ ('a\ set\ \isasymRightarrow \ ('a\ \isasymtimes \ 'a)\ set)\ set"\ ("\isasymO \ \_"\ [90]\ 90)\isanewline
\ \ \isakeyword{where}\ "\isasymO \ U\ \isasymequiv \ \{s\isasymin (\isasymPi \isactrlsub E\ \isasympp \isasymin U.\ (R\isactrlbsub \isasympp \ (+)\ (\isasymcdot )\ \isasymzero \isactrlesub )).\ is\_regular\ s\ U\}"\isanewline
\isanewline
\isacommand{definition}\ (\isakeyword{in}\ comm\_ring)\ add\_sheaf\_spec\isanewline
\ \ \ \ ::\ "('a\ set)\ set\ \isasymRightarrow \ ('a\ set\ \isasymRightarrow \ ('a\ \isasymtimes \ 'a)\ set)\ \isanewline
\ \ \ \ \ \ \ \ \ \ \isasymRightarrow \ ('a\ set\ \isasymRightarrow \ ('a\ \isasymtimes \ 'a)\ set)\ \isasymRightarrow \ ('a\ set\ \isasymRightarrow \ ('a\ \isasymtimes \ 'a)\ set)"\isanewline
\ \ \ \ \isakeyword{where}\ "add\_sheaf\_spec\ U\ s\ s'\ \isasymequiv \ \isanewline
\ \ \ \ \ \ \ \ \ \ \isasymlambda \isasympp \isasymin U.\ quotient\_ring.add\_rel\ (R\ \isasymsetminus \ \isasympp )\ R\ (+)\ (\isasymcdot )\ \isasymzero \ (s\ \isasympp )\ (s'\ \isasympp )"\isanewline
\isanewline
\isacommand{definition}\ (\isakeyword{in}\ comm\_ring)\ mult\_sheaf\_spec\isanewline
\ \ \ \ ::\ "('a\ set)\ set\ \isasymRightarrow \ ('a\ set\ \isasymRightarrow \ ('a\ \isasymtimes \ 'a)\ set)\ \isanewline
\ \ \ \ \ \ \ \ \ \ \ \ \isasymRightarrow \ ('a\ set\ \isasymRightarrow \ ('a\ \isasymtimes \ 'a)\ set)\ \isasymRightarrow \ ('a\ set\ \isasymRightarrow \ ('a\ \isasymtimes \ 'a)\ set)"\isanewline
\ \ \ \ \isakeyword{where}\ "mult\_sheaf\_spec\ U\ s\ s'\ \isasymequiv \ \isanewline
\ \ \ \ \ \ \ \ \ \isasymlambda \isasympp \isasymin U.\ quotient\_ring.mult\_rel\ (R\ \isasymsetminus \ \isasympp )\ R\ (+)\ (\isasymcdot )\ \isasymzero \ (s\ \isasympp )\ (s'\ \isasympp )"\isanewline
\isanewline
\isacommand{definition}\ (\isakeyword{in}\ comm\_ring)\ zero\_sheaf\_spec\isanewline
\ \ \ \ ::\ "'a\ set\ set\ \isasymRightarrow \ ('a\ set\ \isasymRightarrow \ ('a\ \isasymtimes \ 'a)\ set)"\isanewline
\ \ \ \ \isakeyword{where}\ "zero\_sheaf\_spec\ U\ \isasymequiv \ \isanewline
\ \ \ \ \ \ \ \ \ \ \ \ \ \ \ \ \ \ \ \isasymlambda \isasympp \isasymin U.\ quotient\_ring.zero\_rel\ (R\ \isasymsetminus \ \isasympp )\ R\ (+)\ (\isasymcdot )\ \isasymzero \ \isasymone "\isanewline
\isanewline
\isacommand{definition}\ (\isakeyword{in}\ comm\_ring)\ one\_sheaf\_spec\isanewline
\ \ \ \ ::\ "'a\ set\ set\ \isasymRightarrow \ ('a\ set\ \isasymRightarrow \ ('a\ \isasymtimes \ 'a)\ set)"\isanewline
\ \ \ \ \isakeyword{where}\ "one\_sheaf\_spec\ U\ \isasymequiv \ \isanewline
\ \ \ \ \ \ \ \ \ \ \ \ \ \ \ \ \ \ \ \ \isasymlambda \isasympp \isasymin U.\ quotient\_ring.one\_rel\ (R\ \isasymsetminus \ \isasympp )\ R\ (+)\ (\isasymcdot )\ \isasymzero \ \isasymone "\isanewline
\isanewline
\isacommand{definition}\ (\isakeyword{in}\ comm\_ring)\ sheaf\_spec\_morphisms\isanewline
\ \ \ \ \ \ \ \ ::\ "'a\ set\ set\ \isasymRightarrow \ 'a\ set\ set\ \isasymRightarrow \ (('a\ set\ \isasymRightarrow \ ('a\ \isasymtimes \ 'a)\ set)\ \isanewline
\ \ \ \ \ \ \ \ \ \ \ \ \ \ \ \ \ \ \ \ \ \ \ \ \ \ \ \ \ \ \ \ \ \ \ \ \ \ \ \ \isasymRightarrow \ ('a\ set\ \isasymRightarrow \ ('a\ \isasymtimes \ 'a)\ set))"\isanewline
\ \ \ \ \ \ \ \ \isakeyword{where}\ "sheaf\_spec\_morphisms\ U\ V\ \isasymequiv \ \isasymlambda s\isasymin (\isasymO \ U).\ restrict\ s\ V"\isanewline
\isanewline
\isacommand{lemma}\ (\isakeyword{in}\ comm\_ring)\ sheaf\_spec\_is\_sheaf:\isanewline
\ \ \isakeyword{shows}\ "sheaf\_of\_rings\ Spec\ is\_zariski\_open\ sheaf\_spec\ \isanewline
\ \ \ \ \ \ \ \ \ \ \ \ sheaf\_spec\_morphisms\ \isasymO b\ add\_sheaf\_spec\ mult\_sheaf\_spec\ \isanewline
\ \ \ \ \ \ \ \ \ \ \ \ \ \ \ \ \ \ \ \ \ \ \ \ \ \ \ \ \ \ \ \ \ \ \ \ \ \ \ \ zero\_sheaf\_spec\ one\_sheaf\_spec"
\end{isabelle}

Next, we introduce ringed spaces and their morphisms.

\subsubsection*{Ringed Spaces}

\begin{definition}[ringed space]
	A ringed space is a pair $(X, \mathscr{O}_X)$, where $X$ is a topological space and $\mathscr{O}_X$ is a sheaf of rings on $X$.
\end{definition}

\begin{isabelle}
\isacommand{locale}\ ringed\_space\ =\ sheaf\_of\_rings
\end{isabelle}

\begin{definition}[morphism of ringed spaces]
	A morphism of ringed spaces from $(X, \mathscr{O}_X)$ to $(Y, \mathscr{O}_Y)$ is a pair $(f, \phi_f)$ consisting of a continuous map $f: X \rightarrow Y$ between topological spaces and a morphism $\phi_f: \mathscr{O}_Y \rightarrow f_{*} \mathscr{O}_X$ of sheaves of rings.   
\end{definition}

\begin{isabelle}
\isacommand{locale}\ morphism\_ringed\_spaces\ =\isanewline
\ \ \ \ \ \ im\_sheaf\ X\ is\_open\isactrlsub X\ \isasymO \isactrlsub X\ \isasymrho \isactrlsub X\ b\ add\_str\isactrlsub X\ mult\_str\isactrlsub X\ zero\_str\isactrlsub X\ one\_str\isactrlsub X\ \isanewline
\ \ \ \ \ \ \ \ \ \ \ \ \ \ \ \ \ \ \ \ \ \ \ \ \ \ \ \ \ \ \ \ \ \ \ \ \ \ \ \ \ \ \ \ \ \ \ \ \ \ \ \ \ \ \ \ Y\ is\_open\isactrlsub Y\ f\ \isanewline
\ \ \ \ \ \ +\ codom:\ ringed\_space\ Y\ is\_open\isactrlsub Y\ \isasymO \isactrlsub Y\ \isasymrho \isactrlsub Y\ d\ add\_str\isactrlsub Y\ mult\_str\isactrlsub Y\ \isanewline
\ \ \ \ \ \ \ \ \ \ \ \ \ \ \ \ \ \ \ \ \ \ \ \ \ \ \ \ \ \ \ \ \ \ \ \ \ \ \ \ \ \ \ \ \ \ \ \ \ \ \ \ zero\_str\isactrlsub Y\ one\_str\isactrlsub Y\isanewline
\ \ \ \ \ \ \isakeyword{for}\ X\ \isakeyword{and}\ is\_open\isactrlsub X\ \isakeyword{and}\ \isasymO \isactrlsub X\ \isakeyword{and}\ \isasymrho \isactrlsub X\ \isakeyword{and}\ b\ \isakeyword{and}\ add\_str\isactrlsub X\ \isakeyword{and}\ mult\_str\isactrlsub X\isanewline
\ \ \ \ \ \ \ \ \ \ \isakeyword{and}\ zero\_str\isactrlsub X\ \isakeyword{and}\ one\_str\isactrlsub X\ \isakeyword{and}\ Y\ \isakeyword{and}\ is\_open\isactrlsub Y\ \isakeyword{and}\ \isasymO \isactrlsub Y\ \isakeyword{and}\ \isasymrho \isactrlsub Y\ \isanewline
\ \ \ \ \ \ \ \ \ \ \isakeyword{and}\ d\ \isakeyword{and}\ add\_str\isactrlsub Y\ \isakeyword{and}\ mult\_str\isactrlsub Y\ \isakeyword{and}\ zero\_str\isactrlsub Y\ \isakeyword{and}\ one\_str\isactrlsub Y\isanewline
\ \ \ \ \ \ \ \ \ \ \isakeyword{and}\ f\ \isanewline
\ \ \ \ \ \ +\ \isakeyword{fixes}\ \isasymphi \isactrlsub f::\ "'c\ set\ \isasymRightarrow \ ('d\ \isasymRightarrow \ 'b)"\isanewline
\ \ \ \ \isakeyword{assumes}\ is\_morphism\_of\_sheaves:\ "morphism\_sheaves\_of\_rings\ Y\ \isanewline
\ \ \ \ \ \ \ \ \ \ \ \ \ \ \ \ is\_open\isactrlsub Y\ \isasymO \isactrlsub Y\ \isasymrho \isactrlsub Y\ d\ add\_str\isactrlsub Y\ mult\_str\isactrlsub Y\ zero\_str\isactrlsub Y\ one\_str\isactrlsub Y\isanewline
\ \ \ \ \ \ \ \ \ \ \ \ \ \ \ \ im\_sheaf\ im\_sheaf\_morphisms\ b\ add\_im\_sheaf\ mult\_im\_sheaf\ \isanewline
\ \ \ \ \ \ \ \ \ \ \ \ \ \ \ \ zero\_im\_sheaf\ one\_im\_sheaf\ \isasymphi \isactrlsub f"
\end{isabelle}

A classic and ubiquitous notion in mathematics is the \emph{direct limit} of a family of structured sets, here a family of rings.\footnote{In more modern parlance it is called a \textit{colimit}.}

\subsubsection*{Direct Limit of a Presheaf of Rings}

Let $X$ be a topological space and $\mathscr{F}$ a presheaf of rings on $X$. The set $I$ will denote a subset of the set of all open subsets of $X$ such that for every $U$ and $V$ in $I$ there exists $W$ in $I$ with $W \subseteq U \cap V$.
Given $U, V \in I$, $s \in \mathscr{F}(U)$ and $t \in \mathscr{F}(V)$,  we say that 
$s \sim t$ if and only if there exists $W \in I$ such that $W \subseteq U \cap V$ and $s \restriction W = t \restriction W$. One checks that $\sim$ is an equivalence relation.
Now, we consider the quotient of the disjoint union of the $\mathscr{F}(U)$'s.
	\[
	\varinjlim_I \mathscr{F} \coloneqq \coprod_{U \in I} \mathscr{F}(U) \bigg/ \sim 
	\]
Last, we define the following binary operations on $\displaystyle \varinjlim_{I} \mathscr{F}$:
	\begin{align*}
	[(U, s)] + [(V, t)] & = [(W, s \restriction W + t \restriction W)] \\
	[(U, s)] \times [(V, t)] & = [(W, s \restriction W \times t \restriction W)]
	\end{align*}
for some $W \in I$ such that $W \subseteq U \cap V$ and where the symbol $[\_]$ denotes the equivalence class of an element.
These operations are well-defined, and assuming $V \in I$ one can prove that  
	\[
	(\varinjlim_I \mathscr{F}, [(V, 0_V)], +, [(V, 1_V)], \times)
	\]
is a ring. 

\begin{definition}[direct limit of a presheaf of rings]
	Let $X$ be a topological space, $\mathscr{F}$ a presheaf of rings on $X$ and $I$ a set of open subsets of $X$. The direct limit of $\mathscr{F}$ over $I$ is the ring $\displaystyle \varinjlim_I \mathscr{F}$, denoted simply $\varinjlim \mathscr{F}$ if $I$ is clear from the context.
\end{definition}

The definition above, which could naively appear as a dependent type, gives rise to a smooth translation in Isabelle using the locale mechanism.

\begin{isabelle}
\isacommand{locale}\ direct\_lim\ =\ sheaf\_of\_rings\ +\isanewline
\ \ \isakeyword{fixes}\ I::\ "'a\ set\ set"\isanewline
\ \ \isakeyword{assumes}\ subset\_of\_opens:\ "\isasymAnd U.\ U\ \isasymin \ I\ \isasymLongrightarrow \ is\_open\ U"\isanewline
\ \ \ \ \isakeyword{and}\ has\_lower\_bound:\ "\isasymAnd U\ V.\ \isasymlbrakk \ U\isasymin I;\ V\isasymin I\ \isasymrbrakk \ \isasymLongrightarrow \ \isasymexists W\isasymin I.\ W\ \isasymsubseteq \ U\ \isasyminter \ V"\isanewline
\isakeyword{begin}\isanewline
\isanewline
\isacommand{definition}\ rel::\ "('a\ set\ \isasymtimes \ 'b)\ \isasymRightarrow \ ('a\ set\ \isasymtimes \ 'b)\ \isasymRightarrow \ bool"\ (\isakeyword{infix}\ "\isasymsim "\ 80)\isanewline
\ \ \isakeyword{where}\ "x\ \isasymsim \ y\ \isasymequiv \ (fst\ x\ \isasymin \ I\ \isasymand \ fst\ y\ \isasymin \ I)\ \isasymand \ (snd\ x\ \isasymin \ \isasymFF \ (fst\ x)\ \isanewline
\ \ \ \ \ \ \ \ \ \ \ \ \ \ \ \ \ \isasymand \ snd\ y\ \isasymin \ \isasymFF \ (fst\ y))\ \isasymand \ (\isasymexists W.\ (W\ \isasymin \ I)\ \isanewline
\ \ \ \ \ \ \ \ \ \ \ \ \ \ \ \ \ \isasymand \ (W\ \isasymsubseteq \ fst\ x\ \isasyminter \ fst\ y)\ \isasymand \ \isasymrho \ (fst\ x)\ W\ (snd\ x)\ \isanewline
\ \ \ \ \ \ \ \ \ \ \ \ \ \ \ \ \ \ \ \ \ \ \ \ \ \ \ \ \ \ \ \ \ \ \ \ \ \ \ \ \ \ \ \ \ \ \ \ \ =\ \isasymrho \ (fst\ y)\ W\ (snd\ y))"\isanewline
\isanewline
\isacommand{interpretation}\ rel:equivalence\ "(Sigma\ I\ \isasymFF )"\ "\{(x,\ y).\ x\ \isasymsim \ y\}"\isanewline
\isanewline
\isacommand{definition}\ class\_of::\ "'a\ set\ \isasymRightarrow \ 'b\ \isasymRightarrow \ ('a\ set\ \isasymtimes \ 'b)\ set"\ ("\isasymlfloor (\_,/\ \_)\isasymrfloor ")\isanewline
\ \ \isakeyword{where}\ "\isasymlfloor U,s\isasymrfloor \ \isasymequiv \ rel.Class\ (U,\ s)"\isanewline
\isanewline
\isacommand{definition}\ carrier\_direct\_lim::\ "('a\ set\ \isasymtimes \ 'b)\ set\ set"\isanewline
\ \ \isakeyword{where}\ "carrier\_direct\_lim\ \isasymequiv \ rel.Partition"\isanewline
\isanewline
\isacommand{definition}\ get\_lower\_bound::\ "'a\ set\ \isasymRightarrow \ 'a\ set\ \isasymRightarrow \ 'a\ set"\ \isakeyword{where}\isanewline
\ \ "get\_lower\_bound\ U\ V =\ (SOME\ W.\ W\ \isasymin \ I\ \isasymand \ W\ \isasymsubseteq \ U\ \isasymand \ W\ \isasymsubseteq \ V)"\isanewline
\isanewline
\isacommand{definition}\ add\_rel\ ::\ "('a\ set\ \isasymtimes \ 'b)\ set\ \isasymRightarrow \ ('a\ set\ \isasymtimes \ 'b)\ set\ \isanewline
\ \ \ \ \ \ \ \ \ \ \ \ \ \ \ \ \ \ \ \ \ \ \ \ \ \ \ \ \ \ \ \ \ \ \ \ \ \ \ \ \ \ \ \ \ \ \ \ \isasymRightarrow \ ('a\ set\ \isasymtimes \ 'b)\ set"\isanewline
\ \ \isakeyword{where}\ "add\_rel\ X\ Y\ \isasymequiv \ let\isanewline
\ \ \ \ \ \ \ \ \ \ \ \ \ \ x\ =\ (SOME\ x.\ x\ \isasymin \ X);\isanewline
\ \ \ \ \ \ \ \ \ \ \ \ \ \ y\ =\ (SOME\ y.\ y\ \isasymin \ Y);\isanewline
\ \ \ \ \ \ \ \ \ \ \ \ \ \ w\ =\ get\_lower\_bound\ (fst\ x)\ (fst\ y)\isanewline
\ \ \ \ \ \ \ \ \ \ \ \ in\isanewline
\ \ \ \ \ \ \ \ \ \ \ \ \ \ \isasymlfloor w,\ add\_str\ w\ (\isasymrho \ (fst\ x)\ w\ (snd\ x))\ (\isasymrho \ (fst\ y)\ w\ (snd\ y))\isasymrfloor "\isanewline
\isanewline
\isacommand{definition}\ mult\_rel\ ::\ "('a\ set\ \isasymtimes \ 'b)\ set\ \isasymRightarrow \ ('a\ set\ \isasymtimes \ 'b)\ set\ \isanewline
\ \ \ \ \ \ \ \ \ \ \ \ \ \ \ \ \ \ \ \ \ \ \ \ \ \ \ \ \ \ \ \ \ \ \ \ \ \ \ \ \ \ \ \ \ \ \ \ \ \ \ \ \isasymRightarrow \ ('a\ set\ \isasymtimes \ 'b)\ set"\isanewline
\ \ \isakeyword{where}\ "mult\_rel\ X\ Y\ \isasymequiv \ let\isanewline
\ \ \ \ \ \ \ \ \ \ \ \ \ \ x\ =\ (SOME\ x.\ x\ \isasymin \ X);\isanewline
\ \ \ \ \ \ \ \ \ \ \ \ \ \ y\ =\ (SOME\ y.\ y\ \isasymin \ Y);\isanewline
\ \ \ \ \ \ \ \ \ \ \ \ \ \ w\ =\ get\_lower\_bound\ (fst\ x)\ (fst\ y)\isanewline
\ \ \ \ \ \ \ \ \ \ \ \ in\isanewline
\ \ \ \ \ \ \ \ \ \ \ \ \ \ \isasymlfloor w,\ mult\_str\ w\ (\isasymrho \ (fst\ x)\ w\ (snd\ x))\ (\isasymrho \ (fst\ y)\ w\ (snd\ y))\isasymrfloor "\isanewline
\isanewline
\isacommand{lemma}\ direct\_{lim}\_{is}\_{ring}:\isanewline
\ \ \isakeyword{assumes}\ "U\ \isasymin \ I"\isanewline
\ \ \isakeyword{shows}\ "ring\ carrier\_direct\_lim\ add\_rel\ mult\_rel\ \isasymlfloor U,\ \isasymzero \isactrlbsub U\isactrlesub \isasymrfloor \ \isasymlfloor U,\ \isasymone \isactrlbsub U\isactrlesub \isasymrfloor "\isanewline
\isanewline
\isacommand{end}
\end{isabelle}

For every $U \in I$, there is a canonical map from $\mathscr{F}(U)$ to $\varinjlim \mathscr{F}$. 

\begin{isabelle}
\isacommand{definition}\ (\isakeyword{in}\ direct\_lim)\ \isanewline
\ \ \ \ \ \ \ \ canonical\_fun\ ::\ "'a\ set\ \isasymRightarrow \ 'b\ \isasymRightarrow \ ('a\ set\ \isasymtimes \ 'b)\ set"\isanewline
\ \ \isakeyword{where}\ "canonical\_fun\ U\ x\ =\ \isasymlfloor U,\ x\isasymrfloor "
\end{isabelle}

The direct limit of a presheaf of rings satisfies a useful universal property. Indeed, for every ring $A$ equipped with ring morphisms $\psi_U: \mathscr{F}(U) \rightarrow A$ for $U \in I$ satisfying $\psi_V \circ \rho_{UV} = \psi_U$ for every inclusion $V \subseteq U$, there exists a unique ring morphism $u$ making the following diagram commute.
\[
\begin{tikzcd}
\mathscr{F}(U)  \arrow[rr, "\rho_{UV}"] \arrow[rd] \arrow[rdd, bend right, "\psi_U"'] & & \mathscr{F}(V) \arrow[ld]	\arrow[ldd, bend left, "\psi_V"]	\\
						& \varinjlim \mathscr{F} \arrow[d, "u", dashed] &	\\
						& A &  
\end{tikzcd}
\] 
The counterpart in Isabelle is as follows.

\begin{isabelle}
\isacommand{proposition}\ (\isakeyword{in}\ direct\_lim)\ universal\_property:\isanewline
\ \ \isakeyword{fixes}\ A::\ "'c\ set"\ \isakeyword{and}\ \isasympsi ::\ "'a\ set\ \isasymRightarrow \ ('b\ \isasymRightarrow \ 'c)"\ \isanewline
\ \ \ \ \isakeyword{and}\ add::\ "'c\ \isasymRightarrow \ 'c\ \isasymRightarrow \ 'c"\ \isakeyword{and}\ mult::\ "'c\ \isasymRightarrow \ 'c\ \isasymRightarrow \ 'c"\ \isanewline
\ \ \ \ \isakeyword{and}\ zero::\ "'c"\ \isakeyword{and}\ one::\ "'c"\isanewline
\ \ \isakeyword{assumes}\ "ring\ A\ add\ mult\ zero\ one"\isanewline
\ \ \ \ \isakeyword{and}\ "\isasymAnd U.\ U\ \isasymin \ I\ \isasymLongrightarrow \ ring\_homomorphism\ (\isasympsi \ U)\ (\isasymFF \ U)\ (+\isactrlbsub U\isactrlesub )\ \isanewline
\ \ \ \ \ \ \ \ \ \ \ \ \ \ \ \ \ \ \ \ \ \ \ \ \ \ \ \ \ \ \ \ \ \ (\isasymcdot \isactrlbsub U\isactrlesub )\ \isasymzero \isactrlbsub U\isactrlesub \ \isasymone \isactrlbsub U\isactrlesub \ A\ add\ mult\ zero\ one"\isanewline
\ \ \ \ \isakeyword{and}\ "\isasymAnd U\ V\ x.\ \isasymlbrakk U\ \isasymin \ I;\ V\ \isasymin \ I;\ V\ \isasymsubseteq \ U;\ x\ \isasymin \ (\isasymFF \ U)\isasymrbrakk \ \isanewline
\ \ \ \ \ \ \ \ \ \ \ \ \ \ \ \ \ \ \ \ \ \ \ \ \ \ \ \ \ \ \ \ \ \ \ \ \ \ \ \ \ \ \isasymLongrightarrow \ (\isasympsi \ V\ \isasymcirc \ \isasymrho \ U\ V)\ x\ =\ \isasympsi \ U\ x"\isanewline
\ \ \isakeyword{shows}\ "\isasymforall V\isasymin I.\ \isasymexists !u.\ ring\_homomorphism\ u\ carrier\_direct\_lim\ add\_rel\ \isanewline
\ \ \ \ \ \ \ \ \ \ \ \ \ \ \ \ \ \ \ \ \ \ \ \ \ mult\_rel\ \isasymlfloor V,\isasymzero \isactrlbsub V\isactrlesub \isasymrfloor \ \isasymlfloor V,\isasymone \isactrlbsub V\isactrlesub \isasymrfloor \ A\ add\ mult\ zero\ one\isanewline
\ \ \ \ \ \ \ \ \ \ \ \ \ \ \ \ \ \ \ \isasymand \ (\isasymforall U\isasymin I.\ \isasymforall x\isasymin (\isasymFF \ U).\ (u\ \isasymcirc \ canonical\_fun\ U)\ x\ =\ \isasympsi \ U\ x)"
\end{isabelle}

One can instantiate direct limits in the following particular case.

\subsubsection*{Stalks of a Presheaf of Rings}

\begin{definition}[stalks of a presheaf]
	Let $X$ be a topological space, $\mathscr{F}$ a presheaf of rings on $X$ and $x$ an element of $X$.
	The stalk $\mathscr{F}_x$ of $\mathscr{F}$ at $x$ is the direct limit of $\mathscr{F}$ over the set of all the open neighborhoods of $x$.   
\end{definition}

\begin{isabelle}
\isacommand{locale}\ stalk\ =\ direct\_lim\ +\isanewline
\ \ \isakeyword{fixes}\ x::\ "'a"\isanewline
\ \ \isakeyword{assumes}\ is\_elem:\ "x\ \isasymin \ S"\ \isakeyword{and}\ index:\ "I\ =\ \{U.\ is\_open\ U\ \isasymand \ x\ \isasymin \ U\}"\isanewline
\isakeyword{begin}\isanewline
\isanewline
\isacommand{definition}\ carrier\_stalk::\ "('a\ set\ \isasymtimes \ 'b)\ set\ set"\isanewline
\ \ \isakeyword{where}\ "carrier\_stalk\ \isasymequiv \ dlim\ \isasymFF \ \isasymrho \ (neighborhoods\ x)"\isanewline
\isanewline
\isacommand{definition}\ add\_stalk\isanewline
\ \ \ \ ::\ "('a\ set\ \isasymtimes \ 'b)\ set\ \isasymRightarrow \ ('a\ set\ \isasymtimes \ 'b)\ set\ \isasymRightarrow \ ('a\ set\ \isasymtimes \ 'b)\ set"\isanewline
\ \ \isakeyword{where}\ "add\_stalk\ \isasymequiv \ add\_rel"\isanewline
\isanewline
\isacommand{definition}\ mult\_stalk\isanewline
\ \ \ \ ::\ "('a\ set\ \isasymtimes \ 'b)\ set\ \isasymRightarrow \ ('a\ set\ \isasymtimes \ 'b)\ set\ \isasymRightarrow \ ('a\ set\ \isasymtimes \ 'b)\ set"\isanewline
\ \ \isakeyword{where}\ "mult\_stalk\ \isasymequiv \ mult\_rel"\isanewline
\isanewline
\isacommand{definition}\ zero\_stalk::\ "'a\ set\ \isasymRightarrow \ ('a\ set\ \isasymtimes \ 'b)\ set"\isanewline
\ \ \isakeyword{where}\ "zero\_stalk\ V\ \isasymequiv \ class\_of\ V\ \isasymzero \isactrlbsub V\isactrlesub "\isanewline
\isanewline
\isacommand{definition}\ one\_stalk::\ "'a\ set\ \isasymRightarrow \ ('a\ set\ \isasymtimes \ 'b)\ set"\isanewline
\ \ \isakeyword{where}\ "one\_stalk\ V\ \isasymequiv \ class\_of\ V\ \isasymone \isactrlbsub V\isactrlesub "\isanewline
\isanewline
\isacommand{end}
\end{isabelle}

One further step towards schemes consists in introducing local rings.

\subsubsection*{Local Rings}

 First, one needs to teach Isabelle simple facts about \emph{maximal ideals}, starting with their definition. 

\begin{definition}[maximal ideal]
	Let $\mathfrak{m}$ be an ideal of $R$. The ideal $\mathfrak{m}$ is maximal if $\mathfrak{m} \neq R$ and there is no ideal $\mathfrak{a} \neq R$ such that $\mathfrak{m} \subset \mathfrak{a}$.
\end{definition}

\begin{isabelle}
\isacommand{locale}\ max\_ideal\ =\ comm\_ring\ R\ "(+)"\ "(\isasymcdot )"\ "\isasymzero "\ "\isasymone "\ \isanewline
\ \ \ \ \ \ \ \ \ \ \ \ \ \ \ \ \ \ \ \ \ \ \ \ \ \ \ \ \ \ \ \ \ \ +\ ideal\ I\ \ R\ "(+)"\ "(\isasymcdot )"\ "\isasymzero "\ "\isasymone "\isanewline
\ \ \isakeyword{for}\ R\ \isakeyword{and}\ I\ \isakeyword{and}\ addition\ (\isakeyword{infixl}\ "+"\ 65)\ \isanewline
\ \ \ \ \isakeyword{and}\ multiplication\ (\isakeyword{infixl}\ "\isasymcdot "\ 70)\ \isakeyword{and}\ zero\ ("\isasymzero ")\ \isakeyword{and}\ unit\ ("\isasymone ")\ +\isanewline
\ \ \isakeyword{assumes}\ neq\_ring:\ "I\ \isasymnoteq \ R"\ \isanewline
\ \ \ \ \ \ \ \isakeyword{and}\ is\_max:\ "\isasymAnd \isasymaa .\ \isasymlbrakk ideal\ \isasymaa \ R\ (+)\ (\isasymcdot )\ \isasymzero \ \isasymone ;\isasymaa \ \isasymnoteq \ R;I\ \isasymsubseteq \ \isasymaa \ \isasymrbrakk \isasymLongrightarrow \ I\ =\ \isasymaa "
\end{isabelle}

Classic facts about maximal ideals include the fact that every maximal ideal is prime.

\begin{isabelle}
\isacommand{proposition}\ (\isakeyword{in}\ max\_ideal)\ is\_pr\_ideal:\ "pr\_ideal\ R\ I\ (+)\ (\isasymcdot )\ \isasymzero \ \isasymone "
\end{isabelle}
	
We then introduce the notion of a \emph{local ring}.

\begin{definition}[local ring]
	A ring is a local ring if it has a unique maximal left ideal. 	
\end{definition}

\begin{isabelle}
\isacommand{locale}\ local\_ring\ =\ ring\ +\isanewline
\ \ \isakeyword{assumes}\ is\_unique:\ "\isasymAnd I\ J.\ max\_lideal\ I\ R\ (+)\ (\isasymcdot )\ \isasymzero \ \isasymone \isanewline
\ \ \ \ \ \ \ \ \ \ \ \ \ \ \ \ \ \ \ \ \ \ \ \ \ \ \ \ \ \isasymLongrightarrow \ max\_lideal\ J\ R\ (+)\ (\isasymcdot )\ \isasymzero \ \isasymone \ \isasymLongrightarrow \ I\ =\ J"\isanewline
\ \ \ \ \isakeyword{and}\ has\_max\_lideal:\ "\isasymexists \isasymww .\ max\_lideal\ \isasymww \ R\ (+)\ (\isasymcdot )\ \isasymzero \ \isasymone "
\end{isabelle}

Two remarks are in order. First, note that the property of being a local ring does not require the ring to be commutative. As a consequence, for generality, we do not assume the commutativity of the ring in the above definition and in its locale counterpart: hence the use of left ideals instead of two-sided ideals. Using right ideals would give rise to an equivalent definition. 
Second, we recall the following result which can be obtained using Zorn's lemma. Let $R$ be a unital ring, every proper ideal of $R$ lies in a maximal ideal of $R$. As a corollary, every nonzero unital ring has a maximal ideal, hence we could dispense with the property \textit{has\_max\_lideal} in the locale \textit{local\_ring}, but only by adding the extra assumption that $R$ is nonzero. Moreover, our formulation of this locale works as a convenient shortcut, avoiding the cost of formalizing the result recalled above.      

We prove as expected that the local ring of $R$ at $\mathfrak{p}$, previously denoted $R_{\mathfrak{p}}$, is a local ring.

\begin{isabelle}
\isacommand{lemma}\ (\isakeyword{in}\ pr\_ideal)\ local\_ring\_at\_is\_local:\isanewline
\ \ \isakeyword{shows}\ "local\_ring\ carrier\_local\_ring\_at\ add\_local\_ring\_at\ \isanewline
\ \ \ \ \ \ \ \ \ \ \ \ \ \ mult\_local\_ring\_at\ zero\_local\_ring\_at\ one\_local\_ring\_at"
\end{isabelle}

Local rings have their corresponding notion of morphisms. 

\begin{definition}[local homomorphism]
	Let $A$ and $B$ be two local rings and $\mathfrak{m}_A$, $\mathfrak{m}_B$ their respective maximal ideals.
	A local homomorphism $f: A \rightarrow B$ is a morphism of rings such that $\mathfrak{m}_A \subseteq f^{-1} (\mathfrak{m}_B)$. 
\end{definition}

\begin{rem}
	The inclusion $\mathfrak{m}_A \subseteq f^{-1} (\mathfrak{m}_B)$ implies the equality $f^{-1} (\mathfrak{m}_B) = \mathfrak{m}_A$.
\end{rem}	

\begin{isabelle}
\isacommand{locale}\ local\_ring\_morphism\ =\isanewline
\ \ \ \ \ \ \ \ \ \ source:\ local\_ring\ A\ "(+)"\ "(\isasymcdot )"\ \isasymzero \ \isasymone \ \isanewline
\ \ \ \ \ \ \ \ \ \ +\ target:\ local\_ring\ B\ "(+')"\ "(\isasymcdot ')"\ "\isasymzero '"\ "\isasymone '"\isanewline
\ \ \ \ \ \ \ \ \ \ +\ ring\_homomorphism\ f\ A\ "(+)"\ "(\isasymcdot )"\ "\isasymzero "\ "\isasymone "\ B\ "(+')"\ "(\isasymcdot ')"\ "\isasymzero '"\ "\isasymone '"\isanewline
\ \ \isakeyword{for}\ f\ \isakeyword{and}\ A\ \isakeyword{and}\ addition\ (\isakeyword{infixl}\ "+"\ 65)\ \isakeyword{and}\ multiplication\ (\isakeyword{infixl}\ "\isasymcdot "\ 70)\ \isanewline
\ \ \ \ \ \isakeyword{and}\ zero\ ("\isasymzero ")\ \isakeyword{and}\ unit\ ("\isasymone ")\ \isakeyword{and}\ B\ \isakeyword{and}\ addition'\ (\isakeyword{infixl}\ "+'{\kern0pt}'"\ 65)\ \isanewline
\ \ \ \ \ \isakeyword{and}\ multiplication'\ (\isakeyword{infixl}\ "\isasymcdot '{\kern0pt}'"\ 70)\ \isakeyword{and}\ zero'\ ("\isasymzero '{\kern0pt}'")\ \isakeyword{and}\ unit'\ ("\isasymone '{\kern0pt}'")\isanewline
\ \ +\ \isakeyword{assumes}\ preimage\_of\_max\_lideal:\isanewline
\ \ \ \ \ \ "\isasymAnd \isasymww \isactrlsub A\ \isasymww \isactrlsub B.\ max\_lideal\ \isasymww \isactrlsub A\ A\ (+)\ (\isasymcdot )\ \isasymzero \ \isasymone \ \isasymLongrightarrow \ max\_lideal\ \isasymww \isactrlsub B\ B\ (+')\ (\isasymcdot ')\ \isasymzero '\ \isasymone '\isanewline
\ \ \ \ \ \ \ \ \ \ \ \isasymLongrightarrow \ (f\isactrlsup \isasyminverse \ A\ \isasymww \isactrlsub B)\ =\ \isasymww \isactrlsub A"
\end{isabelle}

Finally, we are able to introduce \emph{locally ringed spaces}.

\subsubsection*{Locally Ringed Spaces}

\begin{definition}[locally ringed space]
	A locally ringed space is a ringed space $(X, \mathscr{O}_X)$ such that the stalk $\mathscr{O}_{X, x}$ at $x$ is a local ring for every $x \in X$.
\end{definition}

\begin{isabelle}
\isacommand{locale}\ locally\_ringed\_space\ =\ ringed\_space\ +\isanewline
\ \ \isakeyword{assumes}\ stalks\_are\_local:\ "\isasymAnd x\ U.\ x\ \isasymin \ U\ \isasymLongrightarrow \ is\_open\ U\ \isasymLongrightarrow \ stalk.is\_local\ \isanewline
\ \ \ \ \ \ \ \ \ \ \ is\_open\ \isasymFF \ \isasymrho \ add\_str\ mult\_str\ zero\_str\ one\_str\ (neighborhoods\ x)\ x\ U"
\end{isabelle}

Moreover, one has the following result.

\begin{proposition}
	\label{prop:keyprop}
	For every $\mathfrak{p} \in \text{Spec}\,R$, the stalk $\mathscr{O}_{\spec, \mathfrak{p}}$ is isomorphic as a ring to $R_{\mathfrak{p}}$.	
\end{proposition}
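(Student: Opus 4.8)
The plan is to exhibit the isomorphism as the \emph{evaluation-at-$\mathfrak{p}$} map, built from the universal property of the direct limit proved above. Fix $\mathfrak{p} \in \spec$ and abbreviate $S = R \setminus \mathfrak{p}$, so that $R_{\mathfrak{p}} = S^{-1}R$ is a commutative ring by the \texttt{quotient\_ring} sublocale result. For each Zariski-open neighbourhood $V$ of $\mathfrak{p}$, let $\psi_V \colon \mathscr{O}_{\spec}(V) \to R_{\mathfrak{p}}$ send a section $s$ to its value $s(\mathfrak{p})$; this lands in $R_{\mathfrak{p}}$ because, by the definition of \texttt{sheaf\_spec}, a section is an element of the dependent product $\prod_{\mathfrak{q}\in V} R_{\mathfrak{q}}$. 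Since the ring operations on $\mathscr{O}_{\spec}(V)$ are defined \emph{pointwise} (\texttt{add\_sheaf\_spec}, \texttt{mult\_sheaf\_spec}, \texttt{zero\_sheaf\_spec}, \texttt{one\_sheaf\_spec}), each $\psi_V$ is a ring homomorphism; and because \texttt{sheaf\_spec\_morphisms} is literally function restriction, whenever $\mathfrak{p}\in W\subseteq V$ we have $(s\restriction W)(\mathfrak{p})=s(\mathfrak{p})$, so the family satisfies the compatibility $\psi_W\circ\rho_{VW}=\psi_V$.

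First I would instantiate \texttt{universal\_property} with $A \coloneqq R_{\mathfrak{p}}$ and the family $\{\psi_V\}$, obtaining a unique ring homomorphism $u \colon \mathscr{O}_{\spec,\mathfrak{p}} \to R_{\mathfrak{p}}$ characterised by $u(\lfloor V, s\rfloor) = s(\mathfrak{p})$. Since every element of the stalk is the class of some pair $(V,s)$, the map $u$ is completely determined by evaluation, and it remains only to prove that $u$ is bijective.

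For surjectivity, given $r/f \in R_{\mathfrak{p}}$ with $f \notin \mathfrak{p}$, consider the principal open set $D(f) = \spec \setminus V(\mathfrak{a})$, where $\mathfrak{a}$ is the ideal generated by $f$; this set is Zariski-open and contains $\mathfrak{p}$ because $f \notin \mathfrak{p}$. The assignment $\mathfrak{q} \mapsto r/f$ is well defined on $D(f)$ since $f\notin\mathfrak{q}$ there, and it is a regular section in the sense of \texttt{is\_regular}, being globally a fraction; its image under $u$ is exactly $r/f$. For injectivity it suffices, $u$ being a homomorphism, to prove its kernel trivial. Suppose $u(\lfloor V, s\rfloor) = s(\mathfrak{p}) = 0$ in $R_{\mathfrak{p}}$. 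By regularity of $s$ at $\mathfrak{p}$ there are a neighbourhood $W \subseteq V$ of $\mathfrak{p}$ and $r, f \in R$ with $f \notin \mathfrak{q}$ and $s(\mathfrak{q}) = r/f$ for every $\mathfrak{q} \in W$. Unfolding the localization relation \texttt{rel}, the equality $r/f = 0$ in $R_{\mathfrak{p}}$ furnishes some $g \in R \setminus \mathfrak{p}$ with $g\,r = 0$. On the smaller neighbourhood $W \cap D(g)$, which still contains $\mathfrak{p}$ because $g \notin \mathfrak{p}$, every $\mathfrak{q}$ satisfies $g \notin \mathfrak{q}$ and $g\,r = 0$, whence $r/f = 0$ in $R_{\mathfrak{q}}$; thus $s$ restricts to the zero section over $W \cap D(g)$ and $\lfloor V, s\rfloor = 0$ in the stalk.

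I expect the injectivity argument to be the main obstacle. It is the only step where the topology and the algebra genuinely interact: one must pass from pointwise data at $\mathfrak{p}$ to uniform data on a neighbourhood, which forces two successive shrinkings of the open set ($V \supseteq W \supseteq W \cap D(g)$) while tracking that the witnesses $r$, $f$, $g$ remain simultaneously valid — in particular $f, g \notin \mathfrak{q}$ — for \emph{all} $\mathfrak{q}$ in the final neighbourhood. By contrast, surjectivity and the homomorphism verifications reduce to unfolding the pointwise definitions of the ring structure on $\mathscr{O}_{\spec}$ together with the fraction arithmetic of $R_{\mathfrak{p}}$, which is routine though tedious bookkeeping across the many locale parameters.
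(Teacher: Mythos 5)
Your proposal is correct and follows essentially the same route as the paper, which simply defers to Hartshorne's Proposition 2.2(a): the isomorphism is the evaluation-at-$\mathfrak{p}$ map, surjectivity comes from realizing each fraction $r/f$ as a regular section on the principal open set $D(f)$, and injectivity comes from unfolding the localization relation to find $g \notin \mathfrak{p}$ killing $r$ and then shrinking to $W \cap D(g)$. The only presentational difference is that you route the construction of the map through the formalized universal property of the direct limit rather than defining it directly on representatives, which is a sensible choice in the locale-based setting.
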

\begin{proof}
	See \cite[Prop. 2.2.(a)]{hartshorne}.
\end{proof}	

\begin{isabelle}
\isacommand{locale}\ key\_map\ =\ comm\_ring\ +\isanewline
\ \ \isakeyword{fixes}\ \isasympp ::\ "'a\ set"\ \isakeyword{assumes}\ is\_prime:\ "\isasympp \ \isasymin \ Spec"\isanewline
\isakeyword{begin}\isanewline
\isanewline
\isacommand{interpretation}\ pi:pr\_ideal\ R\ \isasympp \ "(+)"\ "(\isasymcdot )"\ \isasymzero \ \isasymone 
\isanewline
\isanewline
\isacommand{interpretation}\ st:\ stalk\ "Spec"\ is\_zariski\_open\ sheaf\_spec\ sheaf\_spec\_morphisms\isanewline
\ \ \ \ \ \ \ \ \ \ \ \ \ \ \ \ \isasymO b\ add\_sheaf\_spec\ mult\_sheaf\_spec\ zero\_sheaf\_spec\ one\_sheaf\_spec\isanewline
\ \ \ \ \ \ \ \ \ \ \ \ \ \ \ \ "\{U.\ is\_zariski\_open\ U\ \isasymand \ \isasympp \isasymin U\}"\ \isasympp \isanewline
\isanewline
\isacommand{lemma}\ stalk\_at\_prime\_is\_iso\_to\_local\_ring\_at\_prime:\isanewline
\ \ \isakeyword{assumes}\ "is\_zariski\_open\ V"\ \isakeyword{and}\ "\isasympp \ \isasymin \ V"\isanewline
\ \ \isakeyword{shows}\ "\isasymexists \isasymphi .\ ring\_isomorphism\ \isasymphi \ st.carrier\_stalk\ st.add\_stalk\ st.mult\_stalk\ \isanewline
\ \ \ \ \ \ \ \ \ \ \ \ \ \ (st.zero\_stalk\ V)\ (st.one\_stalk\ V)\ (R\ \isactrlbsub \isasympp \ (+)\ (\isasymcdot )\ \isasymzero \isactrlesub )\ \isanewline
\ \ \ \ \ \ \ \ \ \ \ \ \ \ (pi.add\_local\_ring\_at)\ (pi.mult\_local\_ring\_at)\ \isanewline
\ \ \ \ \ \ \ \ \ \ \ \ \ \ (pi.zero\_local\_ring\_at)\ (pi.one\_local\_ring\_at)"\isanewline
\isanewline
\isacommand{end}
\end{isabelle}

Now, it suffices to teach Isabelle that a ring which is isomorphic to a local ring is local.

\begin{isabelle}
\isacommand{lemma}\ isomorphic\_to\_local\_is\_local:\isanewline
\ \ \isakeyword{assumes}\ "local\_ring\ B\ addB\ multB\ zeroB\ oneB"\isanewline
\ \ \ \ \isakeyword{and}\ "ring\_isomorphism\ f\ A\ addA\ multA\ zeroA\ oneA\ B\ addB\ multB\ zeroB\ oneB"\isanewline
\ \ \isakeyword{shows}\ "local\_ring\ A\ addA\ multA\ zeroA\ oneA"
\end{isabelle}
	
\begin{cor}
	$(\text{Spec}\,R, \mathscr{O}_{\spec} )$ is a locally ringed space.
\end{cor}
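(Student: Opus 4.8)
The plan is to unfold the definition of a locally ringed space and reduce the claim to results already proved above. By the locale \textit{locally\_ringed\_space}, the pair $(\text{Spec}\,R, \mathscr{O}_{\spec})$ is a locally ringed space exactly when it is a ringed space and, in addition, for every $\mathfrak{p} \in \text{Spec}\,R$ and every open neighborhood of $\mathfrak{p}$, the stalk $\mathscr{O}_{\spec, \mathfrak{p}}$ is a local ring. I would therefore split the proof into these two obligations and discharge each by interpreting the appropriate locale.

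The ringed-space obligation is immediate. A ringed space is by definition nothing more than a sheaf of rings on a topological space (the locale \textit{ringed\_space} is literally \textit{sheaf\_of\_rings}), and the lemma \textit{sheaf\_spec\_is\_sheaf} already asserts that $\mathscr{O}_{\spec}$ is a sheaf of rings on $\text{Spec}\,R$ equipped with its Zariski topology, the latter being a topological space by \textit{zariski\_is\_topological\_space}. Hence this half discharges by supplying \textit{sheaf\_spec}, \textit{sheaf\_spec\_morphisms} and the associated ring operations as the data of the locale.

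For the stalk obligation I would fix an arbitrary prime $\mathfrak{p} \in \text{Spec}\,R$ together with a Zariski-open neighborhood $V$ of $\mathfrak{p}$, and then chain three earlier results. First, Proposition~\ref{prop:keyprop} — formalized as \textit{stalk\_at\_prime\_is\_iso\_to\_local\_ring\_at\_prime} — supplies a ring isomorphism $\phi$ from the stalk $\mathscr{O}_{\spec, \mathfrak{p}}$, carrying its direct-limit ring structure with distinguished zero and one taken relative to $V$, onto the local ring $R_{\mathfrak{p}}$. Second, the lemma \textit{local\_ring\_at\_is\_local}, instantiated inside the \textit{pr\_ideal} context for $\mathfrak{p}$, shows that $R_{\mathfrak{p}}$ is itself a local ring. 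Third, the transfer lemma \textit{isomorphic\_to\_local\_is\_local} pulls locality back along $\phi$ (whose target $R_{\mathfrak{p}}$ is local, whose source is the stalk), yielding that the stalk is local. Since $\mathfrak{p}$ and $V$ were arbitrary, this establishes the assumption \textit{stalks\_are\_local} and completes the argument.

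The main obstacle is bureaucratic rather than conceptual: it lies in threading the bundle of structure carried by the stalk, as it appears in the assumption \textit{stalks\_are\_local}, through to the precise domain structure expected by the isomorphism of Proposition~\ref{prop:keyprop}. In particular, the distinguished additive and multiplicative units of the stalk are given by \textit{zero\_stalk} and \textit{one\_stalk} as equivalence classes depending on a chosen representative neighborhood, so I must ensure that the choice of $V$ is coherent across the three lemmas and that the stalk genuinely carries the ring structure they presuppose, which is guaranteed by \textit{direct\_lim\_is\_ring}. Once these parameters are matched correctly in the successive locale interpretations, the corollary follows as a direct composition of the cited results.
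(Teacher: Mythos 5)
Your proof is correct and follows exactly the route the paper takes: the ringed-space half is discharged by \textit{sheaf\_spec\_is\_sheaf}, and the stalk condition is obtained by chaining Proposition~\ref{prop:keyprop} (\textit{stalk\_at\_prime\_is\_iso\_to\_local\_ring\_at\_prime}), \textit{local\_ring\_at\_is\_local}, and the transfer lemma \textit{isomorphic\_to\_local\_is\_local}, which is precisely why the paper introduces that lemma immediately before stating the corollary. Your closing remark about matching the neighborhood-dependent stalk structure across the locale interpretations is also consistent with the paper's own comment that the formal machinery, especially Proposition~\ref{prop:keyprop}, is where the real work lies.
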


As a sanity check for our formal definitions, we eventually prove in Isabelle that the spectrum of a commutative ring is indeed a locally ringed space, a result that required some amount of formal machinery, especially to prove the key proposition \ref{prop:keyprop}.

\begin{isabelle}
\isacommand{lemma}\ (\isakeyword{in}\ comm\_ring)\ spec\_is\_locally\_ringed\_space:\isanewline
\ \ \isakeyword{shows}\ "locally\_ringed\_space\ Spec\ is\_zariski\_open\ sheaf\_spec\ \isanewline
\ \ \ \ \ \ \ \ \ \ \ \ \ \ sheaf\_spec\_morphisms\ \isasymO b\ add\_sheaf\_spec\ mult\_sheaf\_spec\ \isanewline
\ \ \ \ \ \ \ \ \ \ \ \ \ \ \ \ \ \ \ \ \ \ \ \ \ \ \ \ \ \ \ \ \ \ \ \ \ \ \ \ \ \ \ \ \ \ zero\_sheaf\_spec\ one\_sheaf\_spec"
\end{isabelle}

The required formal machinery and our formalization as a whole benefited from the interpretation mechanism in order to prevent statements and proof scripts from becoming too unwieldy. Without this mechanism the terms involved in the statements and proof scripts of such an intricate hierarchy of nested algebraic structures would have to be fed with many arguments, cluttering these statements and proof scripts to the point where they would become unreadable and difficult to use. Fortunately, the interpretation mechanism that comes with locales allows for reasonably concise and manageable terms with many implicit arguments which are declared beforehand using the interpretation mechanism and its command \textit{interpretation}.
  
Now, we introduce our last construction.
	Let $(X, \mathscr{O}_X)$, $(Y, \mathscr{O}_Y)$ be two ringed spaces and $(f, \phi_f)$ a morphism between them. Given $x \in X$, consider 
	\[
	I \coloneqq \lbrace V \mid V \, \text{is an open neighborhood of}\, f(x) \rbrace .
	\]
	Since we have a morphism $\phi_f: \mathscr{O}_Y \rightarrow f_* \mathscr{O}_X $ of sheaves on $Y$, we get a map between the direct limits over $I$. 
	\[
	\mathscr{O}_{Y, f(x)} \rightarrow \displaystyle \varinjlim_I f_{*} \mathscr{O}_X
	\]
	Last, there is a natural inclusion from $\displaystyle \varinjlim_I f_{*} \mathscr{O}_X$ to $\mathscr{O}_{X, x}$, since as $V$ ranges over $I$, $f^{-1}(V)$ ranges over a subset of all the open neighborhoods of $x$. So, eventually we get an induced morphism of rings from $\mathscr{O}_{Y, f(x)}$ to $\mathscr{O}_{X, x}$ which we will denote $\phi_{f, x}$.
As usual we embed this construction into a dedicated locale in Isabelle.

\begin{isabelle}
\isacommand{locale}\ ind\_mor\_btw\_stalks\ =\ morphism\_ringed\_spaces\ +\isanewline
\ \ \isakeyword{fixes}\ x::"'a"\isanewline
\ \ \isakeyword{assumes}\ is\_elem:\ "x\ \isasymin \ X"\isanewline
\isakeyword{begin}\isanewline
\isanewline
\isacommand{interpretation}\ stx:stalk\ X\ is\_open\isactrlsub X\ \isasymO \isactrlsub X\ \isasymrho \isactrlsub X\ b\ add\_str\isactrlsub X\ mult\_str\isactrlsub X\ zero\_str\isactrlsub X\ \isanewline
\ \ one\_str\isactrlsub X\ "\{U.\ is\_open\isactrlsub X\ U\ \isasymand \ x\ \isasymin \ U\}"\ "x"\isanewline
\isacommand{proof}\ \isacommand{qed}\ (auto\ simp:\ is\_elem)
\isanewline
\isanewline
\isacommand{interpretation}\ stfx:\ stalk\ Y\ is\_open\isactrlsub Y\ \isasymO \isactrlsub Y\ \isasymrho \isactrlsub Y\ d\ add\_str\isactrlsub Y\ mult\_str\isactrlsub Y\ zero\_str\isactrlsub Y\ \isanewline
\ \ one\_str\isactrlsub Y\ "\{U.\ is\_open\isactrlsub Y\ U\ \isasymand \ (f\ x)\ \isasymin \ U\}"\ "f\ x"\isanewline
\isacommand{proof}\ \isacommand{qed}\ (auto\ simp:\ is\_elem)
\isanewline
\isanewline
\isacommand{definition}\ induced\_morphism::\ "('c\ set\ \isasymtimes \ 'd)\ set\ \isasymRightarrow \ ('a\ set\ \isasymtimes \ 'b)\ set"\ \isanewline
\ \ \isakeyword{where}\ "induced\_morphism\ \isasymequiv \ \isasymlambda C\ \isasymin \ stfx.carrier\_stalk.\ \ \isanewline
\ \ \ \ \ \ \ \ \ \ \ \ \ \ let\ r\ =\ (SOME\ r.\ r\ \isasymin \ C)\ in\ \isanewline
\ \ \ \ \ \ \ \ \ \ \ \ \ \ \ \ \ \ \ \ \ \ stx.class\_of\ (f\isactrlsup \isasyminverse \ X\ (fst\ r))\ (\isasymphi \isactrlsub f\ (fst\ r)\ (snd\ r))"\isanewline
\isanewline
\isacommand{end}
\end{isabelle}

This last construction is required to define morphisms between locally ringed spaces.
			
\begin{definition}[morphism of locally ringed spaces]
	A morphism of locally ringed spaces from $(X, \mathscr{O}_X)$ to $(Y, \mathscr{O}_Y)$ is a morphism $(f, \phi_f)$ between locally ringed spaces such that the induced map of local rings $\phi_{f, x}: \mathscr{O}_{Y, f(x)} \rightarrow \mathscr{O}_{X, x}$ is a local homomorphism for every $x \in X$.  
\end{definition}

\begin{isabelle}
\isacommand{locale}\ morphism\_locally\_ringed\_spaces\ =\ morphism\_ringed\_spaces\ +\isanewline
\ \ \isakeyword{assumes}\ are\_local\_morphisms:\isanewline
\ \ \ \ "\isasymAnd x\ V.\ \isasymlbrakk x\ \isasymin \ X;\ is\_open\isactrlsub Y\ V;\ f\ x\ \isasymin \ V\isasymrbrakk \ \isasymLongrightarrow \ \isanewline
\ \ \ \ \ \ \ \ \ \ \ \ \ \ ind\_mor\_btw\_stalks.is\_local\ X\ is\_open\isactrlsub X\ \isasymO \isactrlsub X\ \isasymrho \isactrlsub X\ add\_str\isactrlsub X\ \isanewline
\ \ \ \ \ \ \ \ \ \ \ \ \ \ \ \ \ \ mult\_str\isactrlsub X\ zero\_str\isactrlsub X\ one\_str\isactrlsub X\ is\_open\isactrlsub Y\ \isasymO \isactrlsub Y\ \isasymrho \isactrlsub Y\ add\_str\isactrlsub Y\ \isanewline
\ \ \ \ \ \ \ \ \ \ \ \ \ \ \ \ \ \ \ \ \ \ mult\_str\isactrlsub Y\ zero\_str\isactrlsub Y\ one\_str\isactrlsub Y\ f\ x\ V\ \isasymphi \isactrlbsub X\ is\_open\isactrlsub X\ \isasymO \isactrlsub X\ \isanewline
\ \ \ \ \ \ \ \ \ \ \ \ \ \ \ \ \ \ \ \ \ \ \ \ \ \ \ \ \ \ \ \ \ \ \ \ \ \ \ \ \ \ \ \ \ \ \ \ \isasymrho \isactrlsub X\ is\_open\isactrlsub Y\ \isasymO \isactrlsub Y\ \isasymrho \isactrlsub Y\ f\ \isasymphi \isactrlsub f\ x\isactrlesub "
\end{isabelle}

\begin{remark}
	A morphism $(f, \phi_f)$ between locally ringed spaces is an isomorphism if and only if $f$ is an isomorphism between topological spaces, \textit{i.e.} a homeomorphism, and $\phi_f$ is an isomorphism between sheaves of rings.
\end{remark}

\begin{isabelle}
\isacommand{locale}\ iso\_locally\_ringed\_spaces\ =\ morphism\_locally\_ringed\_spaces\ +\isanewline
\ \ \isakeyword{assumes}\ is\_homeomorphism:\ "homeomorphism\ X\ is\_open\isactrlsub X\ Y\ is\_open\isactrlsub Y\ f"\ \isanewline
\ \ \ \ \ \ \isakeyword{and}\ is\_iso\_of\_sheaves:\ "iso\_sheaves\_of\_rings\ Y\ is\_open\isactrlsub Y\ \isasymO \isactrlsub Y\ \isasymrho \isactrlsub Y\ d\ \isanewline
\ \ \ \ \ \ \ \ \ \ \ \ add\_str\isactrlsub Y\ mult\_str\isactrlsub Y\ zero\_str\isactrlsub Y\ one\_str\isactrlsub Y\ im\_sheaf\ im\_sheaf\_morphisms\isanewline
\ \ \ \ \ \ \ \ \ \ \ \ \ \ \ b\ add\_im\_sheaf\ mult\_im\_sheaf\ zero\_im\_sheaf\ one\_im\_sheaf\ \isasymphi \isactrlsub f"
\end{isabelle}

Ultimately, we reach our goal: teaching schemes to Isabelle.

\subsubsection*{Schemes}	

\begin{definition}[affine scheme]
	An affine scheme is a locally ringed space $(X, \mathscr{O}_X)$ which is isomorphic (as a locally ringed space) to the spectrum $(\text{Spec}\,R, \mathscr{O}_{\spec} )$ for some commutative ring $R$. 
\end{definition}

\begin{isabelle}
\isacommand{locale}\ affine\_scheme\ =\ comm\_ring\ \isanewline
\ \ \ \ +\ locally\_ringed\_space\ X\ is\_open\ \isasymO \isactrlsub X\ \isasymrho \ b\ add\_str\ mult\_str\ \isanewline
\ \ \ \ \ \ \ \ \ \ \ \ \ \ \ \ \ \ \ \ \ \ \ \ \ \ \ \ \ \ \ \ \ \ \ \ \ \ \ \ \ \ \ \ \ \ \ \ \ \ \ \ zero\_str\ one\_str\ \isanewline
\ \ \ \ +\ iso\_locally\_ringed\_spaces\ X\ is\_open\ \isasymO \isactrlsub X\ \isasymrho \ b\ add\_str\ mult\_str\isanewline
\ \ \ \ \ \ \ \ zero\_str\ one\_str\ "Spec"\ is\_zariski\_open\ sheaf\_spec\ \isanewline
\ \ \ \ \ \ \ \ \ \ \ \ sheaf\_spec\_morphisms\ \isasymO b\ "\isasymlambda U.\ add\_sheaf\_spec\ U"\isanewline
\ \ \ \ \ \ \ \ \ \ \ \ \ \ \ \ \ \ "\isasymlambda U.\ mult\_sheaf\_spec\ U"\ "\isasymlambda U.\ zero\_sheaf\_spec\ U"\ \isanewline
\ \ \ \ \ \ \ \ \ \ \ \ \ \ \ \ \ \ \ \ \ \ \ \ \ \ \ \ \ \ \ \ \ \ \ \ \ \ \ \ \ \ "\isasymlambda U.\ one\_sheaf\_spec\ U"\ f\ \isasymphi \isactrlsub f\isanewline
\ \ \ \ \ \ \ \ \ \ \isakeyword{for}\ X\ is\_open\ \isasymO \isactrlsub X\ \isasymrho \ b\ add\_str\ mult\_str\ zero\_str\ one\_str\ f\ \isasymphi \isactrlsub f
\end{isabelle}

Of course, spectra of commutative rings being locally ringed spaces provide a class of affine schemes.

\begin{isabelle}
\isacommand{lemma}\ (\isakeyword{in}\ comm\_ring)\ spec\_is\_affine\_scheme:\isanewline
\ \ \isakeyword{shows}\ "affine\_scheme\ R\ (+)\ (\isasymcdot )\ \isasymzero \ \isasymone \ Spec\ is\_zariski\_open\ sheaf\_spec\ \isanewline
\ \ \ \ \ \ \ \ \ \ \ \ sheaf\_spec\_morphisms\ \isasymO b\ (\isasymlambda U.\ add\_sheaf\_spec\ U)\ \isanewline
\ \ \ \ \ \ \ \ \ \ \ \ \ \ (\isasymlambda U.\ mult\_sheaf\_spec\ U)\ (\isasymlambda U.\ zero\_sheaf\_spec\ U)\ \isanewline
\ \ \ \ \ \ \ \ \ \ \ \ \ \ \ \ \ \ \ \ \ \ \ \ (\isasymlambda U.\ one\_sheaf\_spec\ U)\ (identity\ Spec)\ \isanewline
\ \ \ \ \ \ \ \ \ \ \ \ \ \ \ \ \ \ \ \ \ \ \ \ \ \ \ \ \ \ \ \ \ \ \ \ \ \ \ \ \ \ \ \ \ \ (\isasymlambda U.\ identity\ (\isasymO \ U))"
\end{isabelle}

\begin{definition}[scheme]
	A scheme is a locally ringed space $(X, \mathscr{O}_X)$ in which every point has an open neighborhood $U$ such that the topological space $U$, together with the sheaf $\mathscr{O}_X | _U$, is an affine scheme.
\end{definition}

\begin{isabelle}
	\isacommand{locale}\isamarkupfalse%
	\ scheme\ {\isacharequal}{\kern0pt}\ locally{\isacharunderscore}{\kern0pt}ringed{\isacharunderscore}{\kern0pt}space\ X\ is{\isacharunderscore}{\kern0pt}open\ {\isasymO}\isactrlsub X\ {\isasymrho}\ b\ \isanewline
	\ \ \ \ \ \ \ \ \ \ \ \ \ \ \ \ \ \ \ \ \ \ \ \ \ \ add{\isacharunderscore}{\kern0pt}str\ mult{\isacharunderscore}{\kern0pt}str\ zero{\isacharunderscore}{\kern0pt}str\ one{\isacharunderscore}{\kern0pt}str\ \isanewline
	\ \ \isakeyword{for}\ X\ is{\isacharunderscore}{\kern0pt}open\ {\isasymO}\isactrlsub X\ {\isasymrho}\ b\ add{\isacharunderscore}{\kern0pt}str\ mult{\isacharunderscore}{\kern0pt}str\ zero{\isacharunderscore}{\kern0pt}str\ one{\isacharunderscore}{\kern0pt}str\ univ\ {\isacharplus}{\kern0pt}\isanewline
	\ \ \isakeyword{assumes}\ are{\isacharunderscore}{\kern0pt}affine{\isacharunderscore}{\kern0pt}schemes{\isacharcolon}{\kern0pt}\ {\isachardoublequoteopen}{\isasymAnd}x{\isachardot}{\kern0pt}\ x\ {\isasymin}\ X\ {\isasymLongrightarrow}\ \isanewline
	\ \ \ \ \ \ \ \ \ \ \ \ {\isacharparenleft}{\kern0pt}{\isasymexists}U{\isachardot}{\kern0pt}\ x{\isasymin}U\ {\isasymand}\ is{\isacharunderscore}{\kern0pt}open\ U\ {\isasymand}\ {\isacharparenleft}{\kern0pt}{\isasymexists}R\ add\ mult\ zero\ one\ f\ {\isasymphi}\isactrlsub f{\isachardot}{\kern0pt}\ \isanewline
	\ \ \ \ \ \ \ \ \ \ \ \ \ \ \ \ \ R\ {\isasymsubseteq}\ univ\ {\isasymand}\ comm{\isacharunderscore}{\kern0pt}ring\ R\ add\ mult\ zero\ one\ {\isasymand}\ \isanewline
	\ \ \ \ \ \ \ \ \ \ \ \ \ \ \ \ \ \ \ \ affine{\isacharunderscore}{\kern0pt}scheme\ R\ add\ mult\ zero\ one\ U\ \isanewline
	\ \ \ \ \ \ \ \ \ \ \ \ \ \ \ \ \ \ \ \ \ \ {\isacharparenleft}{\kern0pt}ind{\isacharunderscore}{\kern0pt}topology{\isachardot}{\kern0pt}ind{\isacharunderscore}{\kern0pt}is{\isacharunderscore}{\kern0pt}open\ X\ is{\isacharunderscore}{\kern0pt}open\ U{\isacharparenright}{\kern0pt}\ \isanewline
	\ \ \ \ \ \ \ \ \ \ \ \ \ \ \ \ \ \ \ \ \ \ {\isacharparenleft}{\kern0pt}ind{\isacharunderscore}{\kern0pt}sheaf{\isachardot}{\kern0pt}ind{\isacharunderscore}{\kern0pt}sheaf\ {\isasymO}\isactrlsub X\ U{\isacharparenright}{\kern0pt}\ \isanewline
	\ \ \ \ \ \ \ \ \ \ \ \ \ \ \ \ \ \ \ \ \ \ {\isacharparenleft}{\kern0pt}ind{\isacharunderscore}{\kern0pt}sheaf{\isachardot}{\kern0pt}ind{\isacharunderscore}{\kern0pt}ring{\isacharunderscore}{\kern0pt}morphisms\ {\isasymrho}\ U{\isacharparenright}{\kern0pt}\ b\ \isanewline
	\ \ \ \ \ \ \ \ \ \ \ \ \ \ \ \ \ \ \ \ \ \ {\isacharparenleft}{\kern0pt}ind{\isacharunderscore}{\kern0pt}sheaf{\isachardot}{\kern0pt}ind{\isacharunderscore}{\kern0pt}add{\isacharunderscore}{\kern0pt}str\ add{\isacharunderscore}{\kern0pt}str\ U{\isacharparenright}{\kern0pt}\isanewline
	\ \ \ \ \ \ \ \ \ \ \ \ \ \ \ \ \ \ \ \ \ \ {\isacharparenleft}{\kern0pt}ind{\isacharunderscore}{\kern0pt}sheaf{\isachardot}{\kern0pt}ind{\isacharunderscore}{\kern0pt}mult{\isacharunderscore}{\kern0pt}str\ mult{\isacharunderscore}{\kern0pt}str\ U{\isacharparenright}{\kern0pt}\ \isanewline
	\ \ \ \ \ \ \ \ \ \ \ \ \ \ \ \ \ \ \ \ \ \ {\isacharparenleft}{\kern0pt}ind{\isacharunderscore}{\kern0pt}sheaf{\isachardot}{\kern0pt}ind{\isacharunderscore}{\kern0pt}zero{\isacharunderscore}{\kern0pt}str\ zero{\isacharunderscore}{\kern0pt}str\ U{\isacharparenright}{\kern0pt}\isanewline
	\ \ \ \ \ \ \ \ \ \ \ \ \ \ \ \ \ \ \ \ \ \ {\isacharparenleft}{\kern0pt}ind{\isacharunderscore}{\kern0pt}sheaf{\isachardot}{\kern0pt}ind{\isacharunderscore}{\kern0pt}one{\isacharunderscore}{\kern0pt}str\ one{\isacharunderscore}{\kern0pt}str\ U{\isacharparenright}{\kern0pt}\ f\ {\isasymphi}\isactrlsub f{\isacharparenright}{\kern0pt}{\isacharparenright}{\kern0pt}{\isachardoublequoteclose}
\end{isabelle}

The only purpose of the variable \textit{univ} in the locale \textit{scheme} is to introduce properly a new type variable, in addition to the type variables from the carriers of $X$ and of the rings $\mathscr{O}_X(U)$, to be used in the type of $R$, the carrier of the ring which is introduced by an existential quantification in the assumption \textit{are\_affine\_schemes} once an open neighborhood $U$ of a given element $x$ in $X$ has been fixed. As the name \textit{univ} suggests, and as can be seen in the two lemmas below, this variable \textit{univ} should be instantiated with \textit{UNIV}, the set of all elements of a given type, while proving that a pair $(X, \mathscr{O}_X)$ is a scheme. The condition $R \subseteq \text{univ}$ in the locale \textit{scheme} is then trivially satisfied and it introduces no additional constraint other than a type constraint on the carrier of the ring $R$ as required by Isabelle type system. 

To match what was achieved using Lean~\cite{schemesinLean}, we finally prove in Isabelle that an affine scheme is a scheme and we give the example of the empty scheme $(\emptyset, \mathscr{O}_\emptyset)$, where $\mathscr{O}_\emptyset(\emptyset)$ is the zero ring $\lbrace 0 \rbrace$. 

\begin{isabelle}
	\isacommand{lemma}\isamarkupfalse%
	\ {\isacharparenleft}{\kern0pt}\isakeyword{in}\ affine{\isacharunderscore}{\kern0pt}scheme{\isacharparenright}{\kern0pt}\ affine{\isacharunderscore}{\kern0pt}scheme{\isacharunderscore}{\kern0pt}is{\isacharunderscore}{\kern0pt}scheme{\isacharcolon}{\kern0pt}\isanewline
	\ \ \isakeyword{shows}\ {\isachardoublequoteopen}scheme\ X\ is{\isacharunderscore}{\kern0pt}open\ {\isasymO}\isactrlsub X\ {\isasymrho}\ b\ add{\isacharunderscore}{\kern0pt}str\ mult{\isacharunderscore}{\kern0pt}str\ \isanewline
	\ \ \ \ \ \ \ \ \ \ \ \ \ \ \ \ \ \ \ \ \ \ \ \ \ \ \ \ zero{\isacharunderscore}{\kern0pt}str\ one{\isacharunderscore}{\kern0pt}str\ {\isacharparenleft}{\kern0pt}UNIV{\isacharcolon}{\kern0pt}{\isacharcolon}{\kern0pt}{\isacharprime}{\kern0pt}a\ set{\isacharparenright}{\kern0pt}{\isachardoublequoteclose}
\end{isabelle}

\begin{isabelle}
	\isacommand{lemma}\isamarkupfalse%
	\ empty{\isacharunderscore}{\kern0pt}scheme{\isacharunderscore}{\kern0pt}is{\isacharunderscore}{\kern0pt}scheme{\isacharcolon}{\kern0pt}\isanewline
	\ \ \isakeyword{shows}\ {\isachardoublequoteopen}scheme\ {\isacharbraceleft}{\kern0pt}{\isacharbraceright}{\kern0pt}\ {\isacharparenleft}{\kern0pt}{\isasymlambda}U{\isachardot}{\kern0pt}\ U{\isacharequal}{\kern0pt}{\isacharbraceleft}{\kern0pt}{\isacharbraceright}{\kern0pt}{\isacharparenright}{\kern0pt}\ {\isacharparenleft}{\kern0pt}{\isasymlambda}U{\isachardot}{\kern0pt}\ {\isacharbraceleft}{\kern0pt}{\isadigit{0}}{\isacharbraceright}{\kern0pt}{\isacharparenright}{\kern0pt}\ \isanewline
	\ \ \ \ \ \ \ \ \ \ \ \ {\isacharparenleft}{\kern0pt}{\isasymlambda}U\ V{\isachardot}{\kern0pt}\ identity{\isacharbraceleft}{\kern0pt}{\isadigit{0}}{\isacharcolon}{\kern0pt}{\isacharcolon}{\kern0pt}nat{\isacharbraceright}{\kern0pt}{\isacharparenright}{\kern0pt}\ {\isadigit{0}}\ {\isacharparenleft}{\kern0pt}{\isasymlambda}U\ x\ y{\isachardot}{\kern0pt}\ {\isadigit{0}}{\isacharparenright}{\kern0pt}\ \isanewline
	\ \ \ \ \ \ \ \ \ \ \ \ \ \ {\isacharparenleft}{\kern0pt}{\isasymlambda}U\ x\ y{\isachardot}{\kern0pt}\ {\isadigit{0}}{\isacharparenright}{\kern0pt}\ {\isacharparenleft}{\kern0pt}{\isasymlambda}U{\isachardot}{\kern0pt}\ {\isadigit{0}}{\isacharparenright}{\kern0pt}\ {\isacharparenleft}{\kern0pt}{\isasymlambda}U{\isachardot}{\kern0pt}\ {\isadigit{0}}{\isacharparenright}{\kern0pt}\ {\isacharparenleft}{\kern0pt}UNIV{\isacharcolon}{\kern0pt}{\isacharcolon}{\kern0pt}nat\ set{\isacharparenright}{\kern0pt}{\isachardoublequoteclose}
\end{isabelle}

\section{Concluding thoughts}\label{sec-conclusion}

\subsection*{Current Limitations of Locales}

Our wish to avoid Isabelle's records stemmed from their lack of multiple inheritance, as discussed above (\S\ref{subsec:locales}). While locales possess multiple inheritance, locales without records make our formalization unnecessarily verbose in some places. The mechanism for locale interpretation is currently missing a convenient way to bundle up a list of arguments in order to prevent argument lists from becoming unwieldy. Such a bundling would ideally still allow easy access to any component of a list. However, these practical limitations are not fundamental; they are engineering issues that could be tackled in the near future.   

\subsection*{The song of the Sirens}

It seems that our experience in Isabelle to formalize schemes was less tumultuous than the corresponding one in Lean, since Kevin Buzzard reported on his blog the difficulties he faced during his formalization in Lean:
\begin{quote}
	The project is completely incompatible with modern Lean and mathlib, but if you compile it then you get a sorry-free proof that an affine scheme is a scheme. During our proof we ran into a huge problem because our blueprint, the Stacks Project, assumed that $R[1/f][1/g]=R[1/fg]$, and this turns out to be unprovable for Lean’s version of equality [\dots]. The Lean community wrestled with this idea, and has ultimately come up with a beefed-up notion of equality for which the identity is now true.\footnote{\url{https://xenaproject.wordpress.com/2020/06/05/the-sphere-eversion-project/}}
\end{quote}	
This difficulty basically comes from defining the sheaf structure of $\spec$ first on the basis given by the so-called \textit{basic open sets} before extending it to all open subsets. We did not meet this difficulty in Isabelle, since we did not follow this sheaf-on-a-basis approach and the sheaves in our library are always defined from the get-go on all open subsets instead.
Also, Buzzard \textit{et al.}\ mention another difficulty encountered when proving that an affine scheme is a scheme. 
\begin{quote}
	This means that rewriting the equality $\iota(U) = U$ can cause technical problems with data-carrying types such as $\mathscr{O}_X(\iota(U))$ which depends on $\iota(U)$ (although in our case they were surmountable). This is a technical issue with dependent type theory and can sometimes indicate that one is working with the wrong definitions. \cite[3.5]{schemesinLean}
\end{quote}	
where $\iota: \spec \rightarrow \spec$ is here simply the identity map. If, again, we gave a slightly different formulation for the definition of a scheme, following Hartshorne~\cite{hartshorne} instead of the Stacks project \cite{stacksproject} as they did, it seems that this difficulty cannot possibly have a direct counterpart in Isabelle, since it arose from the difference between the so-called \textit{equality types}, also known as \textit{identity types}, and the \textit{definitional equality}, a difference which is peculiar to dependent type theories. This difficulty was eventually overcome in Lean using a trick.  
\begin{quote}
	Fortunately, in our case, Reid Barton pointed out the following extraordinary trick to us: we can define the map $\mathscr{O}_X(\iota(U)) \rightarrow \mathscr{O}_X(U)$ using restriction rather than trying to force it to be the identity! \cite[3.5.]{schemesinLean}
\end{quote}	    
The gap between the type theories of Lean and Isabelle/HOL called for the very different approach of the present work. If our approach based on Isabelle's locales required some craftsmanship, locales offered enough flexibility to avoid dead ends. While dependent types are expressive, expert users know they should be used with parsimony and some issues have been highlighted for instance in the context of formalising analysis in the proof assistant Coq \cite[p.42]{boldo2015coquelicot}.
Moreover, in the age of machine learning, one may expect there is a trade-off between the expressiveness of a type theory and one's ability to subject it to automation, for instance through SMT solvers and the so-called hammers embedded in some proof assistants. How deep one can go formalizing mathematics in simple type theory? This work provides a first hint at an answer that should not be ruled out: simple type theory is not too simple.        

\section*{Funding}

This work was supported by the ERC Advanced Grant ALEXANDRIA (Project GA 742178).

\section*{Acknowledgments}
 
We thank Kevin Buzzard for his stimulating wit and his communicative energy and Andr\'e Hirschowitz who commented on a draft of this document. We also thank the reviewers, one of whom noticed an inaccuracy (fortunately easy to correct) in the formal translation into Isabelle of Definition 3.21.  			

\bibliographystyle{plain}
\bibliography{ALEXANDRIA}

\end{document}